\numberwithin{equation}{section}
\begin{document}
\newtheorem{theorem}{Theorem}[section]
\newtheorem{proposition}[theorem]{Proposition}
\newtheorem{remark}[theorem]{Remark}
\newtheorem{corollary}[theorem]{Corollary}
\newtheorem{definition}{Definition}[section]
\newtheorem{lemma}[theorem]{Lemma}

\title{\large Geodesics in the Engel group with a sub-Lorentzian metric
\thanks{Supported by NSFC (No.11071119, No.11401531); NSFC-RFBR (No. 11311120055).}}
\author{\normalsize Qihui Cai$^1$, Tiren Huang$^2$\footnote{Corresponding author}, Yu. L. Sachkov$^3$,
Xiaoping Yang$^1$\\
\scriptsize 1 Department of Applied Mathematics, Nanjing University
of Science $\&$ Technology, Nanjing 210094,China\\
\scriptsize 2 Department of Mathematics, Zhejiang Sci-Tech
University, Hangzhou 310018, China\\
\scriptsize 3 Program Systems Institute Pereslavl-Zalessky 152140,
Russia\\
\scriptsize E-mail: caigada@gmail.com, htiren@ustc.edu.cn,
sachkov@sys.botik.ru, yangxp@njust.edu.cn.}
\date{}
\maketitle
 \fontsize{12}{22}\selectfont\small
\paragraph{Abstract:}
Let $E$ be the Engel  group  and $D$ be a rank 2 bracket generating
left invariant distribution with a Lorentzian metric, which is a
nondegenerate metric of index 1. In this paper, we first study some
properties of horizontal curves on $E$. Second, we prove that
time-like normal geodesics are locally maximizers in the Engel
group, and calculate the explicit expression of non-space-like
geodesics.
\\[10pt]
\emph{Key Words}: Geodesics, Engel Group, sub-Lorentzian metric.
\\[10pt]
\emph{Mathematics Subject Classification}(2010): 58E10, 53C50.

\vspace{1cm}\fontsize{12}{22}\selectfont
 % \linenumbers

\section{\normalsize Introduction}
A sub-Riemannian structure on a manifold $M$ is given by a smoothly
varying distribution $D$ on $M$ and a smoothly varying positively
definite metric $g$ on the distribution. The triple $(M,D,g)$ is
called a \emph{sub-Riemannian manifold}, which has been applied in
control theory, quantum physics, C-R geometry and the other areas.
Some efforts have been made to generalize sub-Riemannian manifold.
One of them leads to the following question: what kind of
geometrical features the mentioned triple will have if we change the
positively definite metric to an indefinite nondegenerate metric? It
is natural to start with the Lorentzian metric of index 1. In this
case the triple: manifold, distribution and Lorentzian metric on the
distribution is called a \emph{sub-Lorentzian manifold} by analogy
with a Lorentzian manifold. For the details concerning the
\emph{sub-Lorentzian geometry}, the reader is referred to
\cite{M.Golubitsky3}.  To our knowledge, there are only a few works
devoted to this subject (see
\cite{Chang,M.Golubitsky3,M.Golubitsky4,
M.Golubitsky5,M.Golubitsky6,Korolko.A}). In \cite{Chang}, Chang,
Markina, and Vasiliev have systematically studied the geodesics in
an anti-de Sitter space with a sub-Lorentzian metric and a
sub-Riemannian metric respectively. In \cite{M.Golubitsky5},
Grochowski computed reachable sets starting from a point in the
Heisenberg sub-Lorentzian manifold on $\mathbb{R}^3$. It was shown
in \cite{Korolko.A} that the Heisenberg group $\mathbb{H}$ with a
Lorentzian metric on $\mathbb{R}^3$ possesses the uniqueness of
Hamiltonian geodesics of time-like or space-like type.

The Engel group was first named by Cartan \cite{Cartan} in 1901. It
is a prolongation of a three dimensional contact manifold, and is a
Goursat manifold. In \cite{Sachkov,Sachkov2,Sachkov3}, A.Ardentov
and Yu.L.Sachkov computed minimizers on the sub-Riemannian Engel
group. In the present article, we study the Engel group furnished
with a sub-Lorentzian metric. This is an interesting example of
sub-Lorentzian manifolds, because the Engel group is the simplest
manifold with nontrivial abnormal extremal trajectories, and the
vector distribution of the Engel group is not $2-$ generating, its
growth vector is $(2,3,4)$. We first study some properties of
horizontal curves in the Engel group. Second, we use the Hamiltonian
formalism and Pontryagin maximum principle to write the equations
for geodesics. Furthermore, we give a complete description of the
Hamiltonian geodesics in the Engel group.

Apart from the introduction, this paper contains three sections.
Section 2 contains some preliminaries as well as definitions of
sub-Lorentzian manifolds, the Engel group. In Section 3, we study
some properties of horizontal curves in the Engel group. In Section
4, we prove that the time-like normal geodesics are locally maximal
in the Engel group , and explicitly calculate the non-space-like
Hamiltonian geodesics.

\section{\normalsize
Preliminaries}\label{sec:bd} A sub-Lorentzian manifold is a triple
$(M,D,g)$, where $M$ is a smooth $n$-dimensional manifold, $D$ is a
smooth distribution on $M$ and $g$ is a smoothly varying Lorentzian
metric on $D$. For each point $p\in M$, a vector $v\in D_p$ is said
to be horizontal. An absolutely continuous curve $\gamma(t)$ is said
to be horizontal if its derivative $\gamma'(t)$ exists almost
everywhere and lies in $D_{\gamma(t)}$.

A vector $v\in D_p$ is said to be time-like if $g(v,v)<0$;
space-like if $g(v,v)>0$ or $v=0$; null(light-like) if $g(v,v)=0$
and $v\neq 0$; and non-space-like if $g(v,v)\leq 0$. A curve
$\gamma(t)$ is said to be time-like if its tangent vector
$\dot\gamma(t)$ is time-like a.e.; space-like if $\dot\gamma(t)$ is
space-like a.e.; null if $\dot\gamma(t)$ is null a.e.;
non-space-like if $\dot\gamma(t)$ is non-space-like a.e..

By a time orientation of $(M,D,g)$, we mean a continuous time-like
vector field on $M$. From now on, we assume that $(M,D,g)$ is
time-oriented. If $X$ is a time orientation on $(M,D,g)$, then a
non-space-like vector $v\in D_p$ is said to be future directed if
$g(v,X(p))<0$, and past directed if $g(v,X(p))>0$. Throughout this
paper, ``f.d." stands for ``future directed", ``t." for
``time-like", and ``nspc." for ``non-space-like".

Let $v,w\in D$ be two non-space-like vectors, we have the following
reverse Schwartz inequality (see page 144 in \cite{O'Neill}):
$$|g(v,w)|\geq\|v\|\cdot\|w\|,$$
where $\|v\|=\sqrt{|g(v,v)|}$. The equality holds if and only if $v$
and $w$ are linearly dependent.

We introduce the space $H_{\gamma(t)}$ of horizontal nspc.\ curves:
\begin{align}
\nonumber H_{\gamma(t)}&=\{\gamma:[0,1]\rightarrow M|\ \gamma(t)
\hbox{ is
absolutely continuous },\ \ g(\dot{\gamma}(t),\dot{\gamma}(t))\leq0,\\
 &\ \ \ \ \ \
  \dot{\gamma}(t)\in D_{\gamma(t)} \hbox{ for almost all }t \in
[0,1]\}.
\end{align}
The sub-Lorentzian length of a horizontal nspc.\ curve $\gamma(t)$
is defined as follows:
$$l(\gamma)=\int_0^1 \|\gamma'(t)\|dt,$$
where $\|\gamma'(t)\|=\sqrt{|g(\gamma'(t),\gamma'(t))|}.$ We use the
length to define the sub-Lorentzian distance $d_U(q_1,q_2)$ with
respect to a set $U\subset M$ between two points $q_1,q_2\in U$:

\[ d_U(q_1,q_2)=\bigg\{\begin{array}{ll} \sup\{l(\gamma), \gamma\in
H_U(q_1,q_2)\} & \hbox{if} \  \ H_U(q_1,q_2)\neq \emptyset \\ 0
&\hbox{if}\  \ H_U(q_1,q_2)=\emptyset,
\end{array}
\]
where $H_U(q_1,q_2)$ is the set of all nspc.f.d\ curves contained in
$U$ and joining $q_1$ and $q_2$.

A nspc.\ curve is said to be a maximizer if it realizes the distance
between its endpoints. We also use the name $U$-geodesic for a curve
in $U$ whose each suitably short sub-arc is a $U$-maximizer.

A distribution $D\subset TM$ is called bracket generating if any
local frame $\{X_i\}_{1\leq i\leq r}$ for $D$, together with all of
its iterated Lie brackets $[X_i,X_j],[X_i,[X_j,X_k]],\cdots$ span
the tangent bundle $TM$. Bracket generating distributions are
sometimes also called completely nonholonomic distributions, or
distributions satisfying H$\ddot{o}$rmander's condition.

\begin{theorem}(Chow) Fix a point $q\in M$. If the distribution $D\subset TM$ is bracket
generating then the set of points that can be connected to $q$ by a
horizontal curve is the component of $M$ containing $q$.
\end{theorem}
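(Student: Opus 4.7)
The plan is to prove Chow's theorem by a standard connectedness argument on orbits. I denote by $\mathcal{O}_q \subset M$ the set of points reachable from $q$ by a horizontal curve, that is, by a finite concatenation of integral curve segments of vector fields taking values in $D$. Reachability is an equivalence relation, since horizontal curves can be reversed in time (run the parameter backwards along each segment) and concatenated, so $M$ partitions into disjoint orbits. The key claim will be that each orbit is open in $M$; granting this, the complement of $\mathcal{O}_q$ inside the connected component of $q$ is the union of all other orbits intersected with that component and hence open, so $\mathcal{O}_q$ is also closed in the connected component. Since $\mathcal{O}_q$ is nonempty, open and closed in a connected set, it exhausts the whole component. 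The reverse inclusion is automatic because a horizontal curve is continuous and hence lies in a single component.

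To prove openness of $\mathcal{O}_q$ near an arbitrary $p \in \mathcal{O}_q$, I would pick a local frame $X_1, \ldots, X_r$ for $D$ near $p$. The bracket generating hypothesis supplies vector fields $Y_1, \ldots, Y_n$, with $n = \dim M$, each an iterated Lie bracket of the $X_i$, whose values at $p$ span $T_p M$. I then consider the map
$F(t_1, \ldots, t_n) = \Phi_{t_1}^{Y_1} \circ \cdots \circ \Phi_{t_n}^{Y_n}(p)$
from a neighborhood of the origin in $\mathbb{R}^n$ to $M$, where $\Phi^Y_t$ denotes the flow of $Y$. By construction $\partial_{t_i} F(0) = Y_i(p)$, so $dF_0$ is an isomorphism and the inverse function theorem yields a neighborhood of $p$ in the image of $F$.

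The final, and most delicate, step is to replace each flow $\Phi_{t_i}^{Y_i}$ in this construction by an explicit composition of flows of the frame fields $X_1, \ldots, X_r$ themselves, since only such compositions produce genuine horizontal curves through $p$. This is done via the classical commutator identity $\Phi^X_{-s} \circ \Phi^Y_{-s} \circ \Phi^X_{s} \circ \Phi^Y_{s}(p) = p + s^2\, [X,Y](p) + O(s^3)$ and its higher-order iterates for nested brackets. The main obstacle is the bookkeeping: turning time $s^2$ into linear time in the $i$-th coordinate requires a substitution like $t_i = \sigma_i^{k_i}$, where $k_i$ is the bracket depth of $Y_i$, and this substitution is only H\"older continuous at the origin. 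The honest justification of openness therefore has to either invoke a topological degree or Brouwer-type open mapping argument for the resulting continuous map $(\sigma_1, \ldots, \sigma_n) \mapsto p'$, or, alternatively, use a more refined smooth composition built from commutator words of appropriate length; either way a small neighborhood of $0$ in $(\sigma_1, \ldots, \sigma_n)$-space is shown to map onto a neighborhood of $p$ consisting of horizontally reachable points. Applied with $p = q$, this openness combined with the orbit argument of the first paragraph completes the proof.
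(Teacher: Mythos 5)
The paper itself offers no proof of this statement: Chow's theorem is quoted there as a classical result and is used only through its corollary that on a connected manifold any two points are joined by a horizontal curve. So your attempt must be judged on its own merits, and on those merits it has a genuine gap. Your topological skeleton is correct and standard: reachability by concatenations of integral curves of sections of $D$ is an equivalence relation, openness of every orbit makes each orbit also closed in a component (its complement being a union of open orbits), and a nonempty open and closed subset of a connected component is the whole component; the reverse inclusion by continuity is also fine. But the openness of the orbit --- which is the entire content of Chow's theorem --- is never actually proved. The smooth map $F$ built from flows of the bracket fields $Y_i$ proves nothing by itself, since those flows are not horizontal, and the repair you outline is exactly where all the work lies, yet you dispatch it with ``either way a small neighborhood \dots is shown to map onto a neighborhood of $p$''. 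Two concrete things are missing. First, the direction problem: with $t_i=\sigma_i^{k_i}$ and $k_i$ even you only produce $t_i\ge 0$, so your substituted map covers only a corner of the image of $F$, not a neighborhood of $p$; to reach negative bracket times one must switch to the reversed commutator word (which generates $-[X,Y]$) when $\sigma_i<0$, and the resulting map is only piecewise defined. Second, the remainder estimate: writing the commutator-word map in coordinates as $\Psi(\sigma)=p+\sum_i\pm|\sigma_i|^{k_i}Y_i(p)+R(\sigma)$, the degree or invariance-of-domain argument requires $R(\sigma)=o\left(\max_i|\sigma_i|^{k_i}\right)$ uniformly, including all cross terms arising from composing different words; without this estimate the homotopy to the invertible leading part may hit $p$ on the boundary of a small ball and the degree argument yields nothing. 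As written, your proof stops exactly where the theorem begins.

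The gap is fillable, and there are two standard ways to do it cleanly, both avoiding the non-smooth substitution you struggle with. One is the Sussmann--Stefan orbit theorem: every orbit of a family of vector fields is an immersed submanifold whose tangent space at each point contains the values of all iterated brackets, so bracket generation forces each orbit to be open, and your first paragraph finishes the proof. The other is the maximal-rank (Krener-type) argument, which is the proof given in Montgomery's book \cite{R.Montgomery1}, already in this paper's bibliography: among all maps $(t_1,\dots,t_N)\mapsto \Phi^{Z_N}_{t_N}\circ\cdots\circ\Phi^{Z_1}_{t_1}(q)$ with $Z_j\in\{X_1,\dots,X_r\}$ and small times, choose one whose differential has maximal rank $k$; if $k<\dim M$, its image near such a point is a $k$-dimensional submanifold to which every $X_i$, hence every iterated bracket, is tangent, contradicting the bracket generating hypothesis; therefore $k=\dim M$, the image contains an open set, and translating along reversible horizontal concatenations shows the orbit is open. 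Either route replaces your degree-theoretic sketch by an argument using only the inverse or constant rank theorem.
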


By Chow's Theorem, we know that if $D$ is bracket generating and $M$
is connected, then any two points of $M$ can be joined by a
horizontal curve.

Now, we describe the Engel group $E$. We consider the Engel group
$E$ with coordinates $q=(x_1,x_2,y,z)\in \mathbb{R}^4$. The group
law is denoted by $\odot$ and defined as follows:
\begin{align}
&(x_1,x_2,y,z)\odot(x_1',x_2',y',z')\nonumber\\
&=\left(x_1+x_1',x_2+x_2',y+y'+\frac{x_1x_2'-x_1'x_2}{2},z+z'+\frac{x_2x_2'}{2}(x_2+x_2')+x_1y'+\frac{x_1x_2'}{2}(x_1+x_1')\right).\nonumber
\end{align}

A vector field $X$ is said to be left-invariant if it satisfies
$dL_qX(e)=X(q)$, where $L_q$ denotes the left translation
$p\rightarrow L_q(p)=q\odot p$ and $e$ is the identity of $E$. This
definition implies that any left-invariant vector field on $E$ is a
linear combination of the following vector fields:
\begin{align}\label{frame}
&X_1=\frac{\partial}{\partial x_1}-\frac {x_2}{2}
\frac{\partial}{\partial y};\ \  X_2=\frac{\partial}{\partial
x_2}+\frac {x_1} {2}\frac{\partial}{\partial
y}+\frac{x_1^2+x_2^2}{2}\frac{\partial}{\partial z};\nonumber\\
&X_3=\frac{\partial}{\partial y}+x_1\frac{\partial}{\partial z};\ \
\ \  X_4=\frac{\partial}{\partial z}.
\end{align}
The distribution $D=span\{X_1,X_2\}$ of $E$ satisfies the bracket
generating condition, since $ X_3=[X_1,X_2], X_4=[X_1,X_3]$.  The
Engel group is a nilpotent Lie group, since $
[X_1,X_4]=[X_2,X_3]=[X_2,X_4]=0.$ We define a smooth Lorentzian
metric $\tilde{g}$ on $E$ such that
$\tilde{g}(X_i,X_j)=(-1)^{\delta_{1i}}\delta_{ij}$, $i,j=1,\cdots,
4$, where $\delta_{ij}$ is the Kronecker symbol. It is not difficult
to compute the coefficients of $\tilde{g}$ under the local
coordinates $(x_1,x_2,y,z)\in \mathbb{R}^4$. The coefficients can be
expressed as
\begin{equation}\label{coe
matrix} (\tilde{g}_{ij})=\begin{pmatrix}
 -1+\frac{x_{2}^{2}}{4}+\frac{x_{1}^{2}x_{2}^{2}}{4}
  & -\frac{x_{1}x_{2}}{4}+\frac{x_{1}x_{2}^{3}}{4}
  & \frac{x_{2}}{2}+\frac{x_{2}x_{1}^{2}}{2}
  & -\frac{x_{1}x_{2}}{2} \\
-\frac{x_{1}x_{2}}{4}+\frac{x_{1}x_{2}^{3}}{4}
  & 1+\frac{x_{1}^{2}}{4}+\frac{x_{2}^{4}}{4}
  & -\frac{x_{1}}{2}+\frac{x_{1}x_{2}^{2}}{2}
  & -\frac{x_{2}^{2}}{2} \\
\frac{x_{2}}{2}+\frac{x_{2}x_{1}^{2}}{2}
  & -\frac{x_{1}}{2}+\frac{x_{1}x_{2}^{2}}{2}
  & 1+x_{1}^{2}
  & -x_{1} \\
-\frac{x_{1}x_{2}}{2}
  & -\frac{x_{2}^{2}}{2}
  & -x_{1}
  & 1
\end{pmatrix}
\end{equation}
When we restrict $\tilde{g}$ to $D$, we can get a smooth sub-Lorentzian
metric $g=\tilde{g}_D$, which satisfies
\begin{align}\label{metric}
g(X_1,X_1)=-1,\ \  g(X_2,X_2)=1, \ \ g(X_1,X_2)=0.
\end{align}
On the other hand, any sub-Lorentzian metric on $D$ can be extended
to a (usually not unique) Lorentzian metric on $E$. In this paper,
we assume that $X_1$ is the time orientation.

%%%%%%%%%%%%%%%%%%%%%%%%%%%%%%%%%%%%%%%%%%%%%%%%%%%%%%%%%%%%%%%%%%%%%%%%%%%%%%%%%%%%%%%%%%%%%%%%%%%%%%%%%%%%%%%%%%%%%%%%%%%%%%%%%%%%%%%%%%%%%%%%
%%%%%%%%%%%%%%%%%%%%%%%%%  第三节
\section{\normalsize Horizontal curves}
Chow's theorem states that any two points can be connected by a
horizontal curve, but we have no information about the character of
horizontal curves. In this section, we will investigate some
properties of horizontal curves.

An absolutely continuous curve $\gamma(s):[0,1]\to E$ is said to be
horizontal if the tangent vector $\dot{\gamma}(s)$ can be expressed
linearly by the horizontal directions $X_{1}$, $X_{2}$, hence we
have the following lemma.

\begin{lemma}A curve $\gamma(s)=(x_{1}(s),x_{2}(s),y(s),z(s))$ is
horizontal with respect to the distribution $D$, if and only if
\begin{align}\label{h condition}
&\frac{x_2\dot{x_1}}{2}-\frac{x_{1}\dot{x_{2}}}{2}+\dot{y}=0,\nonumber\\
&-\frac{x_1^2+x_2^2}{2}\dot x_2+\dot{z}=0.
\end{align}
\end{lemma}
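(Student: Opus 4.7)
The plan is to establish the biconditional by writing $\dot{\gamma}(s)$ in two ways and comparing coefficients in the coordinate basis $\{\partial_{x_1},\partial_{x_2},\partial_y,\partial_z\}$. Since $D=\mathrm{span}\{X_1,X_2\}$, horizontality of $\gamma$ is equivalent to the existence of measurable functions $a(s),b(s)$ with $\dot{\gamma}(s)=a(s)X_1(\gamma(s))+b(s)X_2(\gamma(s))$ for almost every $s$. So the whole argument is a coordinate matching; there is no real analytical obstacle, only bookkeeping.

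First I would expand $\dot{\gamma}$ in the coordinate frame as $\dot{\gamma}=\dot{x}_1\partial_{x_1}+\dot{x}_2\partial_{x_2}+\dot{y}\partial_y+\dot{z}\partial_z$, and then, using the explicit form of $X_1,X_2$ from \eqref{frame}, expand
\[
aX_1+bX_2=a\,\partial_{x_1}+b\,\partial_{x_2}+\Bigl(-\frac{ax_2}{2}+\frac{bx_1}{2}\Bigr)\partial_y+\frac{b(x_1^2+x_2^2)}{2}\partial_z.
\]
For the ``only if'' direction, equating coefficients forces $a=\dot{x}_1$, $b=\dot{x}_2$ from the first two slots, and substitution into the $\partial_y$ and $\partial_z$ slots yields exactly the two scalar identities
\[
\dot{y}+\tfrac{x_2\dot{x}_1}{2}-\tfrac{x_1\dot{x}_2}{2}=0,\qquad \dot{z}-\tfrac{x_1^2+x_2^2}{2}\dot{x}_2=0,
\]
which are \eqref{h condition}.

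For the ``if'' direction, I would simply take $a(s):=\dot{x}_1(s)$ and $b(s):=\dot{x}_2(s)$; these are measurable since $\gamma$ is absolutely continuous. A direct computation then shows $aX_1+bX_2=\dot{\gamma}$ almost everywhere, using the two given identities to recover the $\partial_y$ and $\partial_z$ components. Hence $\dot{\gamma}(s)\in D_{\gamma(s)}$ a.e., completing the equivalence.

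The only subtlety worth flagging is measurability/a.e.\ issues: since $\gamma$ is only assumed absolutely continuous, all the identifications above hold almost everywhere, and $(a,b)=(\dot{x}_1,\dot{x}_2)$ are defined a.e.\ and are measurable. This is standard, so I do not expect any genuine difficulty; the lemma is essentially a restatement of the definition of horizontality in terms of the coordinate components.
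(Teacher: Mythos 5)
Your proof is correct, but it goes by a slightly different route than the paper. You use the \emph{direct} (spanning) description of $D$: horizontality means $\dot\gamma=aX_1+bX_2$ for some measurable $a,b$, and matching coordinate components forces $a=\dot x_1$, $b=\dot x_2$, turning the $\partial_y$ and $\partial_z$ slots into exactly the two constraint equations. The paper instead uses the \emph{dual} (annihilator) description: it notes that $D$ is the common kernel of the one-forms
\begin{equation*}
\omega_1=\frac{x_2}{2}\,dx_1-\frac{x_1}{2}\,dx_2+dy,\qquad
\omega_2=-\frac{x_1^2+x_2^2}{2}\,dx_2+dz,
\end{equation*}
so that $\dot\gamma(s)\in D_{\gamma(s)}$ a.e.\ if and only if $\omega_i(\dot\gamma)=0$ a.e., which is literally the displayed system. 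These are two equivalent pieces of linear algebra (span versus kernel description of the same rank-2 subspace), and each buys something. The paper's version is shorter and makes the biconditional immediate, since evaluating one-forms on $\dot\gamma$ requires no solving for coefficients; strictly speaking it owes a silent one-line check that $\omega_1,\omega_2$ annihilate $X_1,X_2$ and are pointwise independent. Your version needs no such check, produces as a byproduct the identification of the horizontal controls $u_1=\dot x_1$, $u_2=\dot x_2$ --- which the paper has to record separately right after the lemma, in its equation (\ref{h coef1}) --- and makes explicit the measurability/a.e.\ point that the paper leaves implicit. Both arguments are complete; yours is marginally longer but more self-contained.
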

\begin{proof}
The distribution $D$ is the annihilator of the one-forms:
$$\omega_1=\frac{x_2}{2}dx_1-\frac{x_1}{2}dx_2+dy,\ \ \ \omega_2=-\frac{x_1^2+x_2^2}{2}dx_2+dz$$
so $\gamma(s)$ is horizontal if and only if (\ref{h condition})
holds.
\end{proof}

By the same method, we can easily calculate the left invariant
coordinates $u_{1}(s)$ and $u_{2}(s)$ of the horizontal curve
$\gamma(s)$: \begin{align}\label{h coef1} u_{1}=\dot{x_{1}},\ \
u_{2}=\dot{x_{2}}.
\end{align}
The square of the velocity vector for the horizontal curve is:
\begin{align}\label{t condition}
g(\dot\gamma,\dot\gamma)=-u_{1}^{2}+u_{2}^{2}=-\dot{x}_{1}^{2}+\dot{x}_{2}^{2}.
\end{align}
So whether a horizontal curve is time-like(or nspc.) is determined
by the sign of $-\dot{x}_{1}^{2}+\dot{x}_{2}^{2}$.

Next we present a left invariant property of horizontal curves in
the Lie group with sub-Lorentzian metric. That is to say, the causal
character (time-like, space-like, light-like, or non-space-like) of
horizontal curves will not change under left translations. Hence it
is also true for the Engel group.

Let us consider a left-invariant sub-Lorentzian structure on a Lie
group $G$: $\mathscr{D}=span(X_{1},X_{2},\cdots,X_{k})\subset TG$,\
\ $g(X_{i},X_{j})=(-1)^{\delta_{1i}}\delta_{ij}$, with a time
orientation $X_{1}$. The vector fields $X_{i}$ are left-invariant,
i.e.
$$L_{x*}X_{i}(q)=X_{i}(x\cdot q),\ \ x,q\in G, \ \ i=1,\cdots, k.$$

\begin{proposition}\label{left}Left translations preserve the causal character
of horizontal curves of a left-invariant sub-Lorentzian structure on
a Lie group $G$, and the property of future-directness is also
preserved.
\end{proposition}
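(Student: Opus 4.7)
The plan is to exploit left-invariance twice: once for the distribution (to see that horizontality is preserved) and once for the metric expressed in the frame $\{X_i\}$ (to see that the causal coefficients are preserved).

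First I would fix a horizontal curve $\gamma:[0,1]\to G$ and expand its velocity in the left-invariant frame, writing $\dot\gamma(t)=\sum_{i=1}^{k}u_i(t)X_i(\gamma(t))$ for measurable functions $u_i$. Then I would consider the translated curve $\tilde\gamma(t)=x\cdot\gamma(t)=L_x(\gamma(t))$. Differentiating and using the chain rule gives $\dot{\tilde\gamma}(t)=L_{x*}\dot\gamma(t)=\sum_{i=1}^{k}u_i(t)\,L_{x*}X_i(\gamma(t))$. The left-invariance hypothesis $L_{x*}X_i(q)=X_i(x\cdot q)$ then yields $\dot{\tilde\gamma}(t)=\sum_{i=1}^{k}u_i(t)X_i(\tilde\gamma(t))$, so $\tilde\gamma$ is again horizontal and has exactly the same coordinate functions $u_i(t)$ in the moving frame.

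Next I would compute the causal quantity. Because $g(X_i,X_j)=(-1)^{\delta_{1i}}\delta_{ij}$ at every point, we have the pointwise identity
\begin{equation*}
g(\dot{\tilde\gamma}(t),\dot{\tilde\gamma}(t))=-u_1(t)^2+\sum_{i=2}^{k}u_i(t)^2=g(\dot\gamma(t),\dot\gamma(t)).
\end{equation*}
Consequently the sign (or vanishing) of $g(\dot\gamma,\dot\gamma)$ is unchanged almost everywhere under $L_x$, so $\tilde\gamma$ is time-like, space-like, null, or non-space-like precisely when $\gamma$ is. For the future-directness assertion, observe that $X_1$ is itself left-invariant, hence $X_1(\tilde\gamma(t))=L_{x*}X_1(\gamma(t))$, and the same frame computation gives $g(\dot{\tilde\gamma}(t),X_1(\tilde\gamma(t)))=-u_1(t)=g(\dot\gamma(t),X_1(\gamma(t)))$, so the sign that determines future-directness is preserved.

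There is no real obstacle in this argument; the only point that needs care is the regularity issue, namely that the expansion $\dot\gamma=\sum u_iX_i$ makes sense for an absolutely continuous horizontal curve with $u_i\in L^1_{\mathrm{loc}}$, and that $L_{x*}$ commutes with a.e.\ differentiation because $L_x$ is a smooth diffeomorphism. Once these are observed, the proof is a direct verification from the definitions of left-invariance and of the metric in the canonical frame.
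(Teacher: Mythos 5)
Your proof is correct and follows essentially the same route as the paper: expand the velocity in the left-invariant frame, use $L_{x*}X_i(q)=X_i(x\cdot q)$ to see the frame coefficients $u_i$ are unchanged, and then read off $g(\dot{\tilde\gamma},\dot{\tilde\gamma})=-u_1^2+\sum_{i\geq2}u_i^2$ and $g(\dot{\tilde\gamma},X_1)=-u_1$ to conclude both the causal character and future-directness are preserved. Your added remarks on regularity (measurable $u_i$ and $L_{x*}$ commuting with a.e.\ differentiation) are a welcome refinement the paper leaves implicit, but the argument is the same.
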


\begin{proof}
Let $c(t)$ be a causal horizontal curve, and
\begin{equation}
\dot{c}(t)=\sum_{i=1}^{k}u_{i}(t)X_{i}(c(t)).\nonumber
\end{equation}
Then, the left translation $\gamma(t)=x\odot c(t)$ has the same
causal character, since
\begin{align}
\dot{\gamma}(t)&=L_{x*}\dot{c}(t)=L_{x*}(\sum_{i=1}^{k}u_{i}(t)X_{i}(c(t)))
               =\sum_{i=1}^{k}u_{i}(t)L_{x*}(X_{i}(c(t)))\nonumber\\
               &=\sum_{i=1}^{k}u_{i}(t)X_{i}(x\odot
               c(t))=\sum_{i=1}^{k}u_{i}(t)X_{i}(\gamma(t)).\nonumber
\end{align}
Therefore,
\begin{align}
&g(\dot{c}(t),\dot{c}(t))=\sum_{i=1}^{k}(-1)^{\delta_{i1}}u_{i}^{2}=g(\dot{\gamma}(t),\dot{\gamma}(t)),\nonumber\\
&g(\dot{c}(t),X_{1})=-u_{1}=g(\dot{\gamma}(t),X_{1}).\nonumber
\end{align}

\end{proof}
By Chow's Theorem, we know that any two points on the Engel group
can be connected by a horizontal curve. But we do not know its
causal character(time-likeness, space-likeness, light-likeness).
This is not an easy problem. We are able to present some particular
examples to show its complexity.

\textbf{Example 1}: Let $\dot{x}_{2}=0$. Then $x_{2}=x_{2}^{0}$ is
constant. The horizontal condition (\ref{h condition}) becomes
\begin{align}
&\frac{x_{2}}{2}\dot{x_{1}}+\dot{y}=0,\label{th 1}\\
&\dot{z}=0.\label{th 2}
\end{align}
And the square of the velocity vector
\begin{align}\label{tt condition}
-u_{1}^{2}+u_{2}^{2}=-\dot{x}_{1}^{2}\leq0.
\end{align}
It follows that, the curves satisfying (\ref{th 1}) and (\ref{th 2})
are all non-space-like curves. Furthermore, we obtain,
\begin{align}\label{t1}
y(s)=-\frac{1}{2}x_{2}^{0}x_{1}(s)+\frac{1}{2}x_{2}^{0}x_{1}^{0}+y^{0},\
\ \ \ z(s)=z^0.
\end{align}
Therefore, all nonconstant horizontal curves
$c(s)=(x_{1}(s),x_{2}^{0},-\frac{x_{1}(s)x_{2}^{0}}{2}+\frac{x_{1}^{0}x_{2}^{0}}{2}+y^{0},z^{0})$
are time-like. These curves are straight lines. If $\dot x_1=0$,
$c(s)$ degenerate into some points, so there are no null curves in
this family.

\textbf{Example 2}: Let $\dot{x}_{2}\neq0$. We choose $x_{2}$ as a
parameter, then the horizontal condition (\ref{h condition}) becomes
\begin{align}
&\frac{x_{2}}{2}\dot{x_{1}}-\frac{x_{1}}{2}+\dot{y}=0,\label{th 3}\\
&-\frac{x_1^2+x_2^2}{2}+\dot{z}=0.\label{th 4}
\end{align}
And the square of the velocity vector
\begin{align}\label{tt2 condition}
-u_{1}^{2}+u_{2}^{2}=-\dot{x}_{1}^{2}+1.
\end{align}
We consider there different cases.\\
(a)~If $\dot{x}_{1}=0$, then $x_{1}=x_{1}^0$ is constant, (\ref{th
3}) and (\ref{th 4}) become
\begin{align}
&-\frac{x_{1}^0}{2}+\dot{y}=0,\label{th 11}\\
&-\frac{\left(x_1^0\right)^2+x_2^2}{2}+\dot{z}=0\label{th 12}.
\end{align}
In this case, $|\dot{c}(s)|^{2}=1$, so the curves satisfying
(\ref{th 11}) and (\ref{th 12}) are all space-like. Furthermore, we
obtain,
\begin{align}\label{t1}
y(s)=\frac{x_{1}^0}{2}x_2+y^0,\ \ \ \
z(s)=\frac{1}{6}x_2^3+\frac{\left(x_1^0\right)^2}{2}x_2+z_0.
\end{align}
Therefore, all nonconstant horizontal curves
$c(s)=(x_{1}^0,x_{2},\frac{x_{1}^0}{2}x_2+y^0,\frac{1}{6}x_2^3+\frac{\left(x_1^0\right)^2}{2}x_2+z_0)$
are space-like. There are no
null or time-like horizontal curves in this family.\\
(b)~If $\dot{y}=0$, (\ref{th 3}) and (\ref{th 4}) become
\begin{align}
&x_{2}\dot{x_{1}}-x_{1}=0,\label{th 5}\\
&-\frac{x_1^2+x_2^2}{2}+\dot{z}=0.\label{th 6}
\end{align}
From (\ref{th 5}), we get
$$\frac{1}{x_{2}}=\frac{\dot{x}_{1}}{x_{1}},$$
integrating with respect to $x_{2}$, we calculate $x_{1}=\iota
x_{2}$, where $\iota=\frac{x_{1}^{0}}{x_{2}^{0}}$, i.e.
$x_{1}=\frac{x_{1}^{0}}{x_{2}^{0}}x_{2}$, substituting $x_{1}$ in
(\ref{th 6}), we obtain
\begin{align}
z=\frac{1}{6}\left(1+\iota^{2}\right)x_{2}^{3}+z^{0}.
\end{align}
Therefore, all nonconstant horizontal curves
\begin{align}
c(s)=\left(\iota
x_{2},x_{2},y^{0},\frac{1}{6}\left(1+\iota^{2}\right)x_{2}^{3}+z^{0}\right)\label{h
c}
\end{align}
are time-like when $|\iota|>1$. If $|\iota|<1(=1)$, they are
space-like(null).\\
(c)~If $\dot{z}=0$, the horizontal condition becomes:
\begin{align}
&\frac{x_{2}}{2}\dot{x_{1}}-\frac{x_{1}}{2}+\dot{y}=0,\label{th 7}\\
&-\frac{x_1^2+x_2^2}{2}=0.\label{th 8}
\end{align}
So $x_{1}=x_{2}=0$, $y=y_0$. The curves degenerate into some points.
There are no causal (time-like, space-like, null) horizontal curves
in this family.

Thus, any two points $P_1(x_1^0,x_2^0,y^0,z^0)$,
$Q_1(x_1,x_2^0,y^1,z^0)$ can be connected by a time-like horizontal
curve if
$y^1=-\frac{x_{1}x_{2}^{0}}{2}+\frac{x_{1}^{0}x_{2}^{0}}{2}+y^{0}.$
Especially, any two points $(x_1^0,0,y^0,z^0)$, $(x_1,0,y^0,z^0)$
can be connected by a time-like horizontal straight line.

Any two points $P_1(x_1^0,x_2^0,y^0,z^0)$, $Q_2(x_1^0,x_2,y^1,z^1)$
can be connected by a space-like horizontal curve if
$y^1=\frac{x_{1}^0}{2}x_2+y^0$,
$z^1=\frac{1}{6}x_2^3+\frac{\left(x_1^0\right)^2}{2}x_2+z_0.$

Any two points $P_1(x_1^0,x_2^0,y^0,z^0)$, $Q_3(x_1,x_2,y^0,z^1)$
can be connected by a time-like(space-like, null) horizontal curve
if $x_1=\iota x_2$, $z^1=
\frac{1}{6}\left(1+\iota^{2}\right)x_{2}^{3}+z^{0},$ and
$\mid\iota\mid=\left|\frac{x_{1}^{0}}{x_{2}^{0}}\right|>1(<1,\ =1).$

%%%%%%%%%%%%%%%%%%%%%%%%%%%%%%%%%%%%%%%%%%%%%%%%%%%%%%%%%%%%%%%%%%%%%%%%%%%%%%%%%%%%%%%%%%%%%%%%%%%%%%%%%%%%%%%%%%%%%%%%%%%%%%%%%%%%%%%%%%%%%%%%%%%%%%%%%%%%%
%%%%%%%%%%%%%%%%%%%%%%% 第四节
\section{\normalsize Sub-Lorentzian
geodesics} In the Lorentzian geometry there are no curves of minimal
length because two arbitrary points can be connected by a piecewise
light-like curve whose length is always $0$. For example, let
$\mathbb{R}^2$ be the two dimensional Minkowski space,
$\hat{p}=(\hat{x},\hat{y})$ is any one point in this space. We want
to find a light-like curve going from the origin to $\hat{p}$.
First, we choose a curve $\gamma_1(t):(x(t),y(t))=(t,t)$ which
connects the origin and the point $(\frac{\hat x+\hat
y}{2},\frac{\hat x+\hat y}{2})$; then we choose the second curve
$\gamma_2(t):(x(t), y(t))=(t,-t+\hat x+\hat y)$ which joints
$(\frac{\hat x+\hat y}{2},\frac{\hat x+\hat y}{2})$ and $\hat{p}$.
It is easy to check that the curve $\gamma(t)$ consisting of
$\gamma_1$ and $\gamma_2$ is a light-like curve. It goes from the
origin to the point $\hat{p}$, and the length is 0. However, there
do exist time-like curves with maximal length which are time-like
geodesics \cite{O'Neill}. Upon this reason, we will study the
optimality of time-like geodesics, and compute the longest curve
among all horizontal time-like ones on the sub-Lorentzian Engel
group. The computation will be given by extremizing the action
integral ${S}=\frac 1 2\int(-u_1^2+u_2^2)dt$ under constraint
(\ref{h condition}). By Proposition \ref{left}, horizontal time-like
curves are left invariant, so we can assume that the initial point
is origin, i.e., $x_1(0)=x_2(0)=y(0)=z(0)=0$, and time-like initial
velocity is $-u_1^{2}(0)+u_2^{2}(0)=-1$.

Let $\xi=(\xi_1,\xi_2,\xi_3,\xi_4)$ be the vector of costate
variables, so the Hamiltonian function of Pontryagin's maximum
principle is
\begin{align}
H(\xi_0,\xi,q,u)=\xi_0\frac{-u^2_1+u^2_2}{2}+\xi_1u_1+\xi_2u_2+\xi_3\frac{x_1u_2-x_2u_1}{2}+\xi_4\frac{x_1^2+x_2^2}{2}u_2.
\end{align}
where $\xi_0$ is a constant equals to 0 or $-1$. Also, we get the
Hamiltonian system:
\begin{align}
\nonumber&\dot x_1=H_{\xi_1}=u_1, \ \ \dot x_2=H_{\xi_2}=u_2, \ \
\dot y=H_{\xi_3}=\frac{x_1u_2-x_2u_1}{2}, \ \ \dot
z=H_{\xi_4}=\frac{x_1^2+x_2^2}{2}u_2,\\
&\dot \xi_1=-H_{x_1}=-\frac{\xi_3u_2}{2}-\xi_4x_1u_2,\ \
\dot\xi_2=-H_{x_2}=\frac{\xi_3u_1}{2}-\xi_4x_2u_2,\ \
\dot\xi_3=\dot\xi_4=0,
\end{align}
and the maximum condition:
\begin{align}\label{max pri}
H(\xi_0,\xi(t),q(t),u(t))=\max_{\tilde{u}\in\mathbb{R}^2}H(\xi_0,\xi(t),\tilde{q}(t),\tilde{u}),\
\ \xi_0\leq 0,
\end{align}
where ${u}(t)$ is the optimal control, and $(\xi_0,\xi(t))\neq 0$.

\subsection{\normalsize Abnormal extremal trajectories}
We shall investigate the abnormal case $\xi_0=0$. From the maximum
condition (\ref{max pri}) we obtain
\begin{align}
&H_{u_1}=\xi_1-\frac{\xi_3x_2}{2}=0,\label{H_u1}\\
&H_{u_2}=\xi_2+\frac{\xi_3x_1}{2}+\frac{\xi_4(x_1^2+x_2^2)}{2}=0.\label{H_u2}
\end{align}

Differentiating equations (\ref{H_u1}) and (\ref{H_u2}), we obtain
\begin{align}
&0=\dot\xi_1-\frac{\xi_3\dot
x_2}{2}=\dot\xi_1-\frac{\xi_3u_2}{2}=-u_2(\xi_3+\xi_4x_1),\\
&0=\dot\xi_2+\frac{\xi_3\dot x_1}{2}+\xi_4(x_1\dot x_1+x_2\dot
x_2)=u_1(\xi_3+\xi_4x_1).
\end{align}

For the time-like curve, we assume that $-u_1^2+u_2^2=-1$, so
$\xi_3+\xi_4x_1=0$. If $\xi_4=0$, then $\xi_3=0$, and therefore
$\xi=0$. It is a contradiction with the nontriviality of the costate
variables, hence $\xi_4\neq 0$. In this case,
$x_1=\frac{-\xi_3}{\xi_4}$ is a constant, and $u_1=0,\ u_2=\pm i,$
so there is no time-like abnormal extremal in the Engel group $E$.

For the space-like curve, we assume that $-u_1^2+u_2^2=1$, by using
the same method, we get that $u_1=0,\ u_2=\pm 1,$ so the space-like
abnormal extremal trajectories are given by the following
expression:
\begin{align}\label{abnormal extremal}
\gamma(s)=\left(0, \pm  s,0,\pm \frac{s^3}{6}\right).
\end{align}

For the null curve, suppose that $-u_1^2+u_2^2=0$, we can easily get
that $u_1=0,\ u_2=0,$ so the null abnormal extremal trajectories are
trivial curves.

\subsection{\normalsize
Normal geodesics}
\subsubsection{\normalsize Normal Hamiltonian
system} Now we look at the normal case $\xi_0=-1$. It follows from
the
 maximum condition (\ref{max pri}) that
$H_{u_1}=H_{u_2}=0$. Hence
\begin{align}
u_1=-\left(\xi_1-\frac{x_2\xi_3}{2}\right),\ \ \ \
u_2=\xi_2+\frac{\xi_3x_1}{2}+\frac{\xi_4(x_1^2+x_2^2)}{2}.
\end{align}

Let $\zeta_i=(\xi,X_i),i=1,2,$ be the Hamiltonian corresponding to
the basis vector fields $X_1, X_2$ in the cotangent space $T_q^*E$.
They are linear on the fibers of the cotangent space $T^*E$, and
\begin{align}
\zeta_1=\xi_1-\frac{x_2}{2}\xi_3,\ \ \ \
\zeta_2=\xi_2+\frac{x_1}{2}\xi_3+\frac{x_1^2+x_2^2}{2}\xi_4.
\end{align}
So $u_1=-\zeta_1$ and $u_2=\zeta_2$.

The Hamiltonian system in the normal case becomes:
\begin{equation}\label{normal equtions}
\left\{
\begin{array}{ll}
\dot x_1 =\frac{\partial H}{\partial \xi_1}=-(\xi_1-\frac{x_2}{2}\xi_3)=-\zeta_1,\\
\dot x_2=\frac{\partial H}{\partial \xi_2}=(\xi_2+\frac{x_1}{2}\xi_3+\frac{x_1^2+x_2^2}{2}\xi_4)=\zeta_2, \\
\dot y=\frac{\partial H}{\partial
\xi_3}=\zeta_1\frac{x_2}{2}+\zeta_2\frac{x_1}{2}=\frac 1
2(x_1\zeta_2+x_2\zeta_1),\\
\dot z=\frac{\partial H}{\partial \xi_4}=\frac{x_1^2+x_2^2}{2}\zeta_2,\\
\dot \xi_1=-\frac{\partial H}{\partial
x_1}=-\zeta_2(\frac{\xi_3}{2}+x_1\xi_4),\\
\dot \xi_2=-\frac{\partial H}{\partial x_2}=-\frac 1
2\xi_3\zeta_1-x_2\xi_4\zeta_2,\\
\dot\xi_3=-\frac{\partial H}{\partial y}=0,\\
\dot\xi_4=-\frac{\partial H}{\partial z}=0.
\end{array}
\right.
\end{equation}

\begin{definition}
A normal geodesic in the sub-Lorentzian manifold $(E,D,g)$ is a
curve $\gamma:[a,b]\rightarrow E$ that admits a lift
$\Gamma:[a,b]\rightarrow T^*M$, which is a solution of the
 Hamiltonian equations (\ref{normal equtions}). In this case, we say that
$\Gamma$ is a normal lift of $\gamma$.
\end{definition}

Associate with the expression of $H$, a sub-Lorentzian geodesic is
time-like if $H<0$; space-like if $H>0$; light-like if $H=0$.
\begin{remark}
{In fact, abnormal extremal trajectories (\ref{abnormal extremal})
are also normal geodesics, since we can choose the costate variables
as $\widetilde{\xi}=(0, \pm 1,0,0)$, it is easy to check that
$\Gamma(t)=(\gamma,\widetilde{\xi})$ satisfies Hamiltonian equation
(\ref{normal equtions}).} This example also confirms that normal
geodesics and abnormal trajectories are sometimes not mutually
exclusive.
\end{remark}
\begin{lemma} The causal character of normal
sub-Lorentzian geodesics does not depend on time.
\end{lemma}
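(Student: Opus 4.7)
The plan is to exploit the standard fact that a Hamiltonian is a first integral of its own Hamiltonian system, and to identify the value of the normal Hamiltonian $H$ (with the optimal controls plugged in) with a constant multiple of $g(\dot\gamma,\dot\gamma)$. Once both facts are in hand, constancy of the causal character is immediate from the constancy of $H$.

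First, I would check directly that $H$ is constant along solutions of the normal Hamiltonian system. Along $(q(t),\xi(t))$ satisfying (\ref{normal equtions}), with the optimal control already substituted in so that $H$ is regarded as a smooth function of $(q,\xi)$ only, a straightforward chain rule gives
\begin{equation*}
\frac{d}{dt}H = \sum_i\left(\frac{\partial H}{\partial q_i}\dot q_i + \frac{\partial H}{\partial \xi_i}\dot\xi_i\right) = \sum_i\left(\frac{\partial H}{\partial q_i}\frac{\partial H}{\partial \xi_i} - \frac{\partial H}{\partial \xi_i}\frac{\partial H}{\partial q_i}\right) = 0,
\end{equation*}
using the envelope/maximality property to discard the $\partial H/\partial u\cdot \dot u$ terms. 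Hence $H$ is constant in $t$ on every normal extremal.

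Second, I would compute $H$ on a normal extremal. Rewriting the Hamiltonian in terms of $\zeta_1,\zeta_2$ gives
\begin{equation*}
H=\xi_0\,\frac{-u_1^2+u_2^2}{2}+u_1\zeta_1+u_2\zeta_2,
\end{equation*}
and in the normal case $\xi_0=-1$, $u_1=-\zeta_1$, $u_2=\zeta_2$, so a direct substitution yields
\begin{equation*}
H=\frac{-\zeta_1^2+\zeta_2^2}{2}=\frac{-u_1^2+u_2^2}{2}=\frac{1}{2}\,g(\dot\gamma,\dot\gamma),
\end{equation*}
where the last equality uses (\ref{t condition}).

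Combining the two steps, $g(\dot\gamma(t),\dot\gamma(t))=2H$ is constant along the geodesic. In particular its sign (negative, positive, or zero) does not change in $t$, so the geodesic is time-like (resp.\ space-like, light-like) at one instant if and only if it is so at every instant, as remarked just before the lemma. The only mildly subtle point is the first step, making sure that the derivative with respect to $u$ does not contribute when $u$ is itself time-dependent; this is handled by the maximum principle $H_{u_1}=H_{u_2}=0$ that already gave us the formulas $u_1=-\zeta_1$, $u_2=\zeta_2$, so I do not anticipate any real obstacle.
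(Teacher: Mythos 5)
Your proposal is correct and follows essentially the same route as the paper: the paper's proof simply notes that $H$ is a first integral of the Hamiltonian system and that the causal character is read off from the sign of $H$ (as stated just before the lemma). Your write-up merely makes explicit the two ingredients the paper leaves implicit, namely the conservation computation $\dot H=0$ and the identification $H=\tfrac{1}{2}\left(-\zeta_1^2+\zeta_2^2\right)=\tfrac{1}{2}g(\dot\gamma,\dot\gamma)$.
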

\begin{proof} The Hamiltonian $H$ is an
integral of the Hamiltonian system, i.e., $\dot H(s)=0$, this
implies that the causality character does not change for all $t\in
[0,\infty)$.
\end{proof}
\begin{remark} If $\gamma(t)$ is a nspc.
normal geodesic on the Engel group, then the orientation will not
change along the curve. In fact, if $\gamma(t)$ is time-like, and it
is future directed at $t=0$, then we have $-u_1^2(t)+u_2^2(t)=-1,$
$u_1(0)>0.$ We only need to show that $u_1(t)$ will not equal to 0
along the curve $\gamma(t)$. Actually, if there is a $t_1>0,$ such
that $u_1(t_1)=0,$ then we have $u_2^2(t_1)=-1$, it is impossible.
So $u_1(t)$ will not change the sign (since $u_1(t)=-\zeta_1(t)$ is
continuous), and $\gamma(t)$ is future directed along the curve. It
is also true for the other cases.
\end{remark}
\subsubsection{\normalsize Maximality of
short arcs of geodesics}
\begin{definition}
Let $\varphi$ be a smooth function on M, U is an open subset in M,
the horizontal gradient $\nabla_H\varphi$ of $\varphi$ is a smooth
horizontal vector field on $U$ such that for each $p\in U$ and $v\in
H$, $\partial_v\varphi(p)=g(\nabla_H\varphi(p),v)$.
\end{definition}
Locally, we can write
$$\nabla_H\varphi=-(\partial_{X_1}\varphi)X_1+\sum_{i=2}^r(\partial_{X_i}\varphi)X_i.$$

Now we give a  proof that the time-like normal geodesics are locally
maximizing curves on the Engel group.
\begin{proposition}
If $\gamma$ is a t.f.d. (t.p.d.) normal geodesic on the Engel group,
then every sufficiently short subarc of $\gamma$ is a maximizer.
\end{proposition}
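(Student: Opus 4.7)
The plan is a sub-Lorentzian adaptation of the classical calibration proof of local maximality for time-like Lorentzian geodesics. The key step is to construct a smooth function $\varphi$ on a neighborhood $U$ of a short arc of $\gamma$ whose horizontal gradient calibrates $\gamma$: namely, $\nabla_H\varphi$ is time-like, future-directed, and of unit length on $U$ (so $g(\nabla_H\varphi,\nabla_H\varphi)=-1$), and $\nabla_H\varphi(\gamma(t))=\dot\gamma(t)$ for every $t$ in the parameter interval. The t.p.d.\ case follows from the t.f.d.\ case by time reversal, so I treat only the t.f.d.\ case.

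Granting such a $\varphi$, the conclusion is quick. Let $\eta:[0,T]\to U$ be any horizontal nspc.\ f.d.\ curve joining $\gamma(0)$ to $\gamma(\epsilon)$. Because $\dot\eta(t)$ is horizontal, the defining property of the horizontal gradient gives $\frac{d}{dt}\varphi(\eta(t))=g(\nabla_H\varphi(\eta(t)),\dot\eta(t))$. Since both $\nabla_H\varphi$ and $\dot\eta$ are nspc.\ f.d., the reverse Schwartz inequality from Section~2 yields $-g(\nabla_H\varphi,\dot\eta)\geq \|\nabla_H\varphi\|\cdot\|\dot\eta\|=\|\dot\eta\|$; integrating in $t$ gives $\varphi(\eta(0))-\varphi(\eta(T))\geq l(\eta)$. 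Along $\gamma$ the vectors $\nabla_H\varphi$ and $\dot\gamma$ coincide, so reverse Schwartz is an equality and $\varphi(\gamma(0))-\varphi(\gamma(\epsilon))=l(\gamma)$. Comparing the two displays at the matched endpoints yields $l(\gamma)\geq l(\eta)$, i.e., $\gamma$ is a $U$-maximizer.

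The main difficulty is the construction of $\varphi$, which I would produce by the Hamilton--Jacobi / Lagrangian manifold method. Pick a smooth hypersurface $\Sigma\subset E$ through $\gamma(0)$ transverse to $\dot\gamma(0)$, and extend the initial covector $\xi(0)$ of the normal lift of $\gamma$ to a smooth section $\xi:\Sigma\to T^*E|_\Sigma$ satisfying $H(\xi(q),q)=-\tfrac{1}{2}$ and making $\{(q,\xi(q)):q\in\Sigma\}$ an isotropic submanifold of $T^*E$. Propagating this datum by the normal Hamiltonian flow (\ref{normal equtions}) yields a Lagrangian submanifold $\Lambda\subset T^*E$ containing the full normal lift of $\gamma$. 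By transversality of $\Sigma$ to $\dot\gamma(0)$ the cotangent-bundle projection $\pi:\Lambda\to E$ is a local diffeomorphism at the point over $\gamma(0)$, hence remains one over $\gamma([0,\epsilon])$ for $\epsilon$ small enough. On the neighborhood $U=\pi(\Lambda)$, $\Lambda$ is the graph of $d\varphi$ for a unique smooth $\varphi$; the eikonal identity $H(d\varphi,q)=-\tfrac{1}{2}$ translates into $g(\nabla_H\varphi,\nabla_H\varphi)=-1$, and the normal equations combined with $u_1=-\zeta_1,\ u_2=\zeta_2$ give $\nabla_H\varphi\circ\gamma=\dot\gamma$. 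The ``sufficiently short'' restriction in the statement is precisely the threshold up to which $\pi$ remains a diffeomorphism, i.e., the first focal time of the time-like congruence emanating from $\Sigma$.
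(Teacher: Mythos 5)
Correct, and essentially the paper's own approach: the paper likewise builds a congruence of normal geodesics emanating from a hypersurface $S$ with conormal covector data normalized by $H=-\tfrac12$, obtains from it a function $V$ whose horizontal gradient is a unit time-like field agreeing (up to sign) with $\dot\gamma$ along $\gamma$, and concludes with the reverse Schwartz inequality, treating the t.p.d.\ case by a parallel argument rather than by time reversal. Your Lagrangian-graph/eikonal formulation is only a more formal packaging of the same construction (your $\varphi$ equals $-V$ up to an additive constant), with the side benefit that the isotropy condition you impose on the initial section is precisely what justifies the step the paper dismisses as easy to check, namely $\partial_{Y_2}V=0$.
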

\begin{proof}
Assume that $\gamma:(a,b)\rightarrow E$ is parameterized by
arc-length,
$\dot\gamma(t)=u_1^0(t)X_1(\gamma(t))+u_2^0(t)X_2(\gamma(t)),$ $X_1$
is the time orientation, and
$\tilde\Gamma(t)=(\gamma(t),\lambda(t))$ is the normal lift of
$\gamma.$ So we have $H(\gamma(t),\lambda(t))=-\frac{1}{2},\
t\in(a,b).$ For any $c\in(a,b), \epsilon>0,$ let
$J_c=(c-\epsilon,c+\epsilon)\subset(a,b)$ be a neighborhood of $c$.
We will prove that $\gamma|_{J_c}$ is maximal for any $c\in(a,b)$
and small $\epsilon>0$. Since the sub-Lorentzian metric is left
invariant, so we can assume that $\gamma(c)=0,\
\lambda(c)=\lambda_0.$ Consider an $n-1$ dimensional hypersurface
$S$ passing through the origin $0$, and satisfying
$\lambda_0(T_0(S))=0.$ Let $\bar{\lambda}$ be a smooth one-form on
an open neighborhood $\Omega$ of $0,$ such that
$\bar{\lambda}(0)=\lambda_0$, and $\forall p\in S\cap\Omega,$
$\bar{\lambda}(p)(T_pS)=0$, $H(p,\bar{\lambda}(p))=-\frac{1}{2}$.
Let $\Gamma_p=(\gamma_p,\lambda_p)$ be the solution of
$\dot\Gamma(t)=\vec{H}(\Gamma(t)),\ \Gamma(c)=(p,\bar\lambda(p))$.
Then clearly $\Gamma_0=\tilde\Gamma.$ Since $\dot\gamma(0)\not\in
T_0S,$ by the Implicit Function Theorem, there exits a
diffeomorphism:
$$\nu:(c-\epsilon,c+\epsilon)\times W\rightarrow U\subset E,$$
$$(t,p)\rightarrow \gamma_p(t),$$
where $W$ is a neighborhood of $0$ in $S$, $U\subset\Omega$ is a
neighborhood of $0$ in $E$. Define a smooth function $V:U\rightarrow
R$ as:
$$V(x)=t,\ \ \ \ if\  x=\gamma_p(t),$$
we will show that $\|\nabla_HV\|=1.$ For this purpose, let $Y_1$ be
the vector field on $U$ given by
$$Y_1(x)=\dot{\gamma}_p(t)=u_1(p,t)X_1(\gamma_p(t))+u_2(p,t)X_2(\gamma_p(t)), \ \ \ \ if \ x=\gamma_p(t),$$
where $u_1(p,t), u_2(p,t)$ are smooth functions on
$W\times(c-\epsilon,c+\epsilon),$ and $u_1(0,t)=u_1^0(t),
u_2(0,t)=u_2^0(t)$. Since $H(p,\bar{\lambda}(p))=-\frac{1}{2}$, by
the construction of $\Gamma_p(t)$, we have
$H(\gamma_p(t),\lambda_p(t))=-\frac{1}{2}$, and $-u_1^2+u_2^2=-1.$
It is easy to check that $Y_1=u_1X_1+u_2X_2, Y_2=u_2X_1+u_1X_2$ is
also an orthonormal basis of $D$, so $\partial_{Y_1}V=1,\
\partial_{Y_2}V=0.$ Therefore, $\nabla_HV=-Y_1,$ $\|\nabla_HV\|=\sqrt{|g(-Y_1,-Y_1)|}=\sqrt{|-u_1^2+u_2^2|}=1.$
Choose $t_1,t_2$ in the domain of $\gamma.$ If $\gamma(t)$ is a
t.f.d. geodesic, then $|u_1^0|>|u_2^0|$, and $u_1^0>0.$ Since
$u_1(0,t)=u_1^0,$ and $u_1(p,t)$ is a smooth function, so there
exists a neighborhood $W_1\times(c-\epsilon_1,c+\epsilon_1)\subset
W\times(c-\epsilon,c+\epsilon)$ such that $u_1(p,t)>0$. Thus
$\nabla_HV=-Y_1$ is past directed. On the other hand, since
$-u_1^2+u_2^2=-1$, we have $|u_1|>|u_2|$. Let
$\eta:[0,\alpha]\rightarrow U$ be a t.f.d. curve with
$\eta(0)=\gamma(t_1)$, $\eta(\alpha)=\gamma(t_2)$, and
$\dot\eta=v_1X_1+v_2X_2,$ then $|v_1|>|v_2|, v_1>0,$ so
$g(\dot\eta,\nabla_HV)=u_1v_1-u_2v_2>0,$ and
\begin{align}
\nonumber L(\gamma|_{[t_1,t_2]})&=t_2-t_1=V(\gamma(t_2))-V(\gamma(t_1))=\int_0^{\alpha}\frac{dV(\eta(s))}{ds}ds\\
\nonumber&=\int_0^{\alpha}g(\dot\eta,\nabla_HV)ds\geq\int_0^{\alpha}\|\dot\eta(s)\|ds=L(\eta|_{[0,\alpha]}).
\end{align}
By the reverse Schwartz inequality, $L(\gamma)=L(\eta)$ holds if and
only if $\eta$ can be reparameterized as a trajectory of
$-\nabla_HV.$ If $\gamma(t)$ is a t.p.d. geodesic, then
$|u_1^0|>|u_2^0|$, and $u_1^0<0.$ By the same method, we choose a
neighborhood such that $W_2\times(c-\epsilon_2,c+\epsilon_2)\subset
W\times(c-\epsilon,c+\epsilon)$ such that $u_1(p,t)<0$. Thus
$\nabla_HV=-Y_1$ is future directed. Let $\rho:[0,\alpha]\rightarrow
U$ be a t.p.d. curve with $\rho(0)=\gamma(t_1),$
$\rho(\alpha)=\gamma(t_2)$, and $\dot\rho=\mu_1X_1+\mu_2X_2,$ then
$|\mu_1|>|\mu_2|, \mu_1<0,$ so
$g(\dot\rho,\nabla_HV)=u_1\mu_1-u_2\mu_2>0,$ and
\begin{align}
\nonumber L(\gamma|_{[t_1,t_2]})&=t_2-t_1=V(\gamma(t_2))-V(\gamma(t_1))=\int_0^{\alpha}\frac{dV(\eta(s))}{ds}ds\\
\nonumber&=\int_0^{\alpha}g(\dot\rho,\nabla_HV)ds\geq\int_0^{\alpha}\|\dot\rho(s)\|ds=L(\rho|_{[0,\alpha]}).
\end{align}
By the reverse Schwartz inequality, $L(\gamma)=L(\rho)$ holds if and
only if $\rho$ can be reparameterized as a trajectory of
$-\nabla_HV.$ In conclusion, the t.f.d(t.p.d.) normal geodesics are
locally maximizers. This ends the proof.
\end{proof}
Next, we compute the expressions of light-like geodesics and
time-like geodesics on the Engel group.

Differentiating $\zeta_i$,
\begin{align}
&\dot \zeta_1=\dot\xi_1-\frac{\xi_3}{2}\dot x_2=-\zeta_2(\xi_3+x_1\xi_4),\\
&\dot\zeta_2=\dot\xi_2+\frac{1}{2}\dot{x}_{1}\xi_{3}+(x_{1}\dot{x}_{1}+x_{2}\dot{x}_{2})\xi_{4}
  =-\zeta_1(\xi_3+x_1\xi_4).
\end{align}
Let
\begin{align}
\beta(s)=-(\xi_3+x_1\xi_4), \ \ \dot\beta=\xi_4\zeta_1,
\end{align}
then we have
\begin{align}\label{nor equations2}
\dot\zeta_1=\beta\zeta_2,\ \ \dot\zeta_2=\beta\zeta_1,\ \
\dot\beta=\xi_4\zeta_1.
\end{align}
\subsubsection{\normalsize Light-like
geodesics} Firstly, we study the case of light-like sub-Lorentzian
geodesics.

By the definition, we have $H=\frac{1}{2}(-h_{1}^{2}+h_{2}^{2})=0$,
thus $h_{2}=\pm h_{1}.$ If $h_{2}=h_{1}$, then light-like
trajectories satisfy the ODE:
$$\dot{\gamma}=-h_{1}(X_{1}-X_{2}),$$
i.e. they are reparameterizations of the one-parametric subgroup of
the field $X_{1}-X_{2}$. We assume $\dot{\gamma}=X_{1}-X_{2}$, so
\begin{align}
\dot{x}_{1}=1,\ \dot{x}_{2}=-1,\ \dot{y}=-\frac{1}{2}(x_{1}+x_{2}),\
\dot{z}=-\frac{1}{2}(x_{1}^{2}+x_{2}^{2}),\nonumber
\end{align}
thus
\begin{align}
x_{1}=t,\ x_{2}=-t,\ y=0,\ z=-\frac{1}{3}t^{3}.\nonumber
\end{align}
If $h_{2}=-h_{1}$, similarly, we obtain
\begin{align}
x_{1}=t,\ x_{2}=t,\ y=0,\ z=\frac{1}{3}t^{3}.\nonumber
\end{align}
In conclusion, we get the following theorem:
\begin{theorem}
Light-like horizontal geodesics starting from the origin
 are reparameterizations of the curves:
\begin{align}
x_{1}=t,\ x_{2}=\pm t,\ y=0,\ z=\pm\frac{1}{3}t^{3},\nonumber
\end{align}
i.e., they are reparameterizations of the one-parameter subgroups
corresponding to the vector fields $X_1\pm X_2.$
\end{theorem}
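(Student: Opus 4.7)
The plan is to extract the light-like condition from the Hamiltonian $H$ computed along normal extremals, use it to pin down the direction of $\dot\gamma$, and then integrate the resulting left-invariant ODE starting at the origin.

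First, I would substitute the maximizing controls $u_1 = -\zeta_1$ and $u_2 = \zeta_2$ from the preceding normal subsection into the expression for $H$. A direct computation gives, along normal extremals,
\[
H \;=\; \tfrac{1}{2}(-\zeta_1^2 + \zeta_2^2) \;=\; \tfrac{1}{2}\,g(\dot\gamma,\dot\gamma),
\]
so the defining condition of a light-like geodesic, $H\equiv 0$, becomes $\zeta_2^2 = \zeta_1^2$. Continuity of $\zeta_1,\zeta_2$ together with the non-triviality of the costate then forces $\zeta_2 = \varepsilon\,\zeta_1$ with a single fixed sign $\varepsilon = \pm 1$ along the whole curve. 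Feeding this back into $\dot\gamma = u_1 X_1 + u_2 X_2$ yields
\[
\dot\gamma \;=\; -\zeta_1\bigl(X_1 - \varepsilon X_2\bigr),
\]
so $\dot\gamma$ is everywhere a scalar multiple of the constant left-invariant field $X_1 - \varepsilon X_2$. Hence, up to reparameterization, $\gamma$ is an integral curve of one of $X_1 \pm X_2$, that is, a one-parameter subgroup of $E$.

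Second, I would reparameterize so that $\dot\gamma = X_1 - \varepsilon X_2$, reducing the theorem to two simple initial-value problems. For $\varepsilon = -1$ the system reads $\dot x_1 = 1$, $\dot x_2 = -1$, together with $\dot y = -\tfrac{1}{2}(x_1+x_2)$ and $\dot z = -\tfrac{1}{2}(x_1^2+x_2^2)$ dictated by the horizontal condition (\ref{h condition}). Integrating from the origin gives $x_1 = t$, $x_2 = -t$, whence $x_1 + x_2 \equiv 0$ so $y \equiv 0$, and $z = -\tfrac{1}{3} t^3$. The case $\varepsilon = +1$ is handled identically and yields $x_1 = x_2 = t$, $y = 0$, $z = \tfrac{1}{3} t^3$, matching the claimed formulas.

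The only subtlety is ruling out the degenerate branch $\zeta_1 \equiv 0$, which by $\zeta_2 = \varepsilon\zeta_1$ forces $\zeta_2 \equiv 0$ and hence $\dot\gamma \equiv 0$; this produces only the stationary curve at the origin, which is not a genuine geodesic and may be discarded from the statement. I do not anticipate any real obstacle beyond this bookkeeping, since the rest of the argument is an explicit integration of a constant-coefficient left-invariant vector field.
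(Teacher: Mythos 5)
Your proposal is correct and follows essentially the same route as the paper: both substitute the normal controls to get $H=\tfrac12(-\zeta_1^2+\zeta_2^2)$, impose $H=0$ to obtain $\zeta_2=\pm\zeta_1$, conclude that $\dot\gamma$ is proportional to the constant field $X_1\mp X_2$, and integrate the resulting left-invariant ODE from the origin. Your added care about sign-constancy and the degenerate branch $\zeta_1\equiv 0$ goes slightly beyond the paper, which skips both; the only flaw is a harmless label swap in your two cases (with your convention $\zeta_2=\varepsilon\zeta_1$, it is $\varepsilon=+1$, not $\varepsilon=-1$, that gives $\dot x_1=1,\ \dot x_2=-1$ after reparameterization), and since both branches are worked out and match the stated formulas, nothing is affected.
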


\subsubsection{\normalsize Time-like
geodesics}Secondly, we study time-like sub-Lorentzian geodesics on
the Engel group.

We consider the case of $\xi_4=0$ at first. This case is also of
interest since it reproduces the earlier known results for the
Heisenberg group \cite{Korolko.A}. In this case
$\beta=-(\xi_3+x_1\xi_4)=-\xi_3$ is a constant. Equations (\ref{nor
equations2}) become
\begin{align}\label{ode0}
\dot\zeta_1=-\xi_3\zeta_2,\ \ \dot\zeta_2=-\xi_3\zeta_1,
\end{align}
where $\xi_3$ is a constant. There are two separate cases:

\textbf{Case 1}: If $\xi_3=0$, we have $\zeta_1$ and $\zeta_2$ are
constants, i.e., $\zeta_1(s)=\zeta_1(0)=\xi_1(0)$ and
$\zeta_2(s)=\zeta_2(0)=\xi_2(0)$. According to (\ref{normal
equations}), $\xi_1$ and $\xi_2$ are constants. On the other hand,
by integrating $\dot x_1=-\zeta_1$ and $\dot x_2=\zeta_2$, we get
\begin{align}
x_1(s)=-\xi_1s \ \  \hbox{and}\ \ x_2(s)=\xi_2s.
\end{align}
Since $\dot y=\frac 1 2(x_1\zeta_2+x_2\zeta_1)=0$, then $y(s)=0$.
Also
$$\dot z=\frac{x_1^2+x_2^2}{2}\zeta_2=\frac{\xi_1^2+\xi_2^2}{2}\xi_2s^2,$$
so
$$z(s)=\frac{\xi_1^2+\xi_2^2}{6}\xi_2s^3=\frac{x_1^2(s)x_2(s)+x_2^3(s)}{6}.$$

\begin{theorem}
In the case of $\xi_3=\xi_4=0$, there is a unique time-like
horizontal geodesic joining the origin to a point $(x_1,x_2,y,z)$,
if and only if $y=0$, $z$ is the following function of $x_1,x_2$:
\begin{align}z=\frac{x_1^2x_2+x_2^3}{6}.\end{align}
And the expression of the geodesic is
\begin{align}
x_1(s)=-\xi_1s,\ \ x_2(s)=\xi_2s,\ \ y(s)=0,\ \
z(s)=\frac{\xi_1^2+\xi_2^2}{6}\xi_2s^3,
\end{align}
where $\xi_1, \xi_2$ are constants.
The arc-length is given by
\begin{align}
l=\sqrt{x_1^2-x_2^2}.
\end{align}
Its projection to the $(x_1,x_2)$ plane is a straight line.
\end{theorem}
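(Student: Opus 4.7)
My approach is to specialize the normal Hamiltonian system to the degenerate case $\xi_3=\xi_4=0$, integrate it explicitly, and then read off both the endpoint constraints (for the ``only if'' direction) and the explicit inversion (for the ``if'' direction). Under $\xi_3=\xi_4=0$, the evolution equations (\ref{nor equations2}) reduce to $\dot{\zeta}_1=\dot{\zeta}_2=0$, so $\zeta_1\equiv \xi_1$ and $\zeta_2\equiv \xi_2$ are constants (using $\xi_1,\xi_2$ for $\xi_1(0),\xi_2(0)$). From (\ref{normal equtions}) with $x_1(0)=x_2(0)=0$ the first two coordinates integrate immediately to $x_1(s)=-\xi_1 s$ and $x_2(s)=\xi_2 s$.

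Next I would substitute these into $\dot{y}=\tfrac12(x_1\zeta_2+x_2\zeta_1)$ to see that the integrand vanishes identically, forcing $y(s)\equiv 0$, and then integrate $\dot{z}=\tfrac12(x_1^2+x_2^2)\zeta_2=\tfrac12(\xi_1^2+\xi_2^2)\xi_2\, s^2$ with $z(0)=0$ to obtain the cubic formula for $z(s)$. Setting $s=1$ and eliminating $\xi_1,\xi_2$ via $\xi_1=-x_1$, $\xi_2=x_2$ produces the constraint $z=\tfrac{1}{6}(x_1^2 x_2+x_2^3)$, together with $y=0$; this proves necessity.

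For sufficiency (the ``if'' direction) I simply reverse the construction: given $(x_1,x_2,0,\tfrac{1}{6}(x_1^2 x_2+x_2^3))$ with $x_1^2>x_2^2$ (which, because $-\dot{x}_1^2+\dot{x}_2^2=-\xi_1^2+\xi_2^2$ is conserved, is equivalent to the time-likeness condition imposed earlier), I pick $\xi_1=-x_1$, $\xi_2=x_2$ and check that the explicit solution above hits the prescribed endpoint. Uniqueness is free: the normal Hamiltonian ODE has unique solutions and the endpoint determines $(\xi_1,\xi_2)$ uniquely. The arc-length follows from the fact that $|\dot{\gamma}(s)|=\sqrt{|-\xi_1^2+\xi_2^2|}$ is constant in $s$, giving $l=\int_0^1\sqrt{\xi_1^2-\xi_2^2}\,ds=\sqrt{x_1^2-x_2^2}$, while the straight-line claim in the $(x_1,x_2)$-plane is immediate from the linearity of $x_1(s),x_2(s)$.

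There is no real obstacle here, since the ODE system collapses to elementary quadratures once both costates $\xi_3,\xi_4$ are set to zero; the only care required is to align the sign conventions (in particular that time-likeness forces $\xi_1^2>\xi_2^2$ so the square roots in the arc-length formula are real) and to verify that the identity $y(s)\equiv 0$ comes out of the cancellation $x_1\zeta_2+x_2\zeta_1=(-\xi_1 s)\xi_2+(\xi_2 s)\xi_1=0$, which is the algebraic reason the $y$-coordinate is forced to vanish in this branch.
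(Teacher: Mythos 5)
Your proposal is correct and follows essentially the same route as the paper: under $\xi_3=\xi_4=0$ the system (\ref{nor equations2}) gives constant $\zeta_1=\xi_1$, $\zeta_2=\xi_2$, and the coordinates $x_1,x_2,y,z$ are obtained by the same elementary quadratures, with the same cancellation $x_1\zeta_2+x_2\zeta_1=0$ forcing $y\equiv 0$. The only difference is that you spell out the sufficiency, uniqueness, and time-likeness ($\xi_1^2>\xi_2^2$) arguments explicitly, which the paper leaves implicit in its pre-theorem computation.
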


\textbf{Case 2}: If $\xi_3\neq0$, from (\ref{ode0}), we have
\begin{align}
&\zeta_1(s)=\xi_1^{0}\cosh(\xi_3s)-\xi_2^{0}\sinh(\xi_3s),\\
&\zeta_2(s)=-\xi_1^{0}\sinh(\xi_3s)+\xi_2^{0}\cosh(\xi_3s),
\end{align}
where $\xi_1^{0}=\xi_1(0)$, $\xi_2^{0}=\xi_2(0)$. So
\begin{align}
&x_1=-\int_{0}^s\zeta_1(t)dt=-\frac{\xi_1^{0}}{\xi_3}\sinh(\xi_3s)+\frac{\xi_2^{0}}{\xi_3}\left(\cosh(\xi_3s)-1\right),\label{x 1}\\
&x_2=\int_{0}^t\zeta_2(t)dt=-\frac{\xi_1^{0}}{\xi_3}\left(\cosh(\xi_3s)-1\right)+\frac{\xi_2^{0}}{\xi_3}\sinh(\xi_3s).\label{x
 22}
\end{align}

Substituting them into the expression of $\dot y, \dot z$ in
(\ref{normal equtions}), and integrating, we get
\begin{theorem}
In the case of $\xi_{3}\neq0, \xi_{4}=0$, the time-like horizontal
geodesics starting from the origin are given by:
\begin{align}
&x_1(s)=-A_1\sinh(\xi_3s)+A_2\left(\cosh(\xi_3s)-1\right),\label{x 1}\\
&x_2(s)=-A_1\left(\cosh(\xi_3s)-1\right)+A_2\sinh(\xi_3s),\label{x 2}\\
&y(s)=\frac{1}{2}(A_2^2-A_1^2)\left(\xi_3s-\sinh(\xi_3s)\right),\label{y(s)}\\
&z(s)=A_2(A_1^2+A_2^2)\cosh^2(\xi_3s)\sinh(\xi_3s)
-\frac{2}{3}A_2^3\sinh^{3}(\xi_3s)-\frac{1}{3}A_1(A_1^2+3A_2^2)\cosh^3(\xi_3s)\nonumber\\
&\ \ \ \ +\frac{1}{2}A_1(A_1^2+3A_2^2)\cosh^{^{2}}(\xi_3s)-\frac{1}{2}A_2(3A_1^2+A_2^2)\sinh(\xi_3s)\cosh(\xi_3s)-\frac{1}{2}A_2(3A_1^2+A_2^2){s}\nonumber\\
&\ \ \ \ -\frac{1}{6}A_1(A_1^2+3A_2^2).
\end{align}
where $\xi_1^0=\xi_1(0), \xi_2^0=\xi_2(0)$ is the initial value,
$\xi_3$, $A_1=\frac{\xi_1^0}{\xi_3}$, $A_2=\frac{\xi_2^0}{\xi_3}$
are constants.
\end{theorem}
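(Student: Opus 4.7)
The plan is to integrate the Hamiltonian system (\ref{normal equtions}) explicitly under the simplifying assumption $\xi_4=0$, $\xi_3\neq 0$, one variable at a time, and match the resulting antiderivatives to the displayed formulas by imposing the initial data $x_1(0)=x_2(0)=y(0)=z(0)=0$. Since $\dot\xi_3=\dot\xi_4=0$, the combination $\beta=-(\xi_3+x_1\xi_4)$ collapses to the constant $-\xi_3$, so the auxiliary subsystem (\ref{nor equations2}) is precisely the linear hyperbolic system (\ref{ode0}) whose solution is already recorded just above the theorem. Writing $A_1=\xi_1^0/\xi_3$ and $A_2=\xi_2^0/\xi_3$ and integrating $\dot x_1=-\zeta_1$, $\dot x_2=\zeta_2$ directly yields the expressions for $x_1(s)$ and $x_2(s)$ displayed in (\ref{x 1})--(\ref{x 2}), so these two components come essentially for free.

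For $y(s)$ I would substitute the four hyperbolic functions into $\dot y=\tfrac{1}{2}(x_1\zeta_2+x_2\zeta_1)$. Writing $c=\cosh(\xi_3 s)$ and $\sigma=\sinh(\xi_3 s)$ for brevity, the mixed $A_1A_2$ terms cancel pairwise and a single use of $c^2-\sigma^2=1$ collapses the remainder to
$$\dot y=\tfrac{\xi_3}{2}(A_1^2-A_2^2)(c-1).$$
One antiderivative and the initial condition $y(0)=0$ then recover (\ref{y(s)}).

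The main labor is the computation of $z(s)$ from $\dot z=\tfrac{1}{2}(x_1^2+x_2^2)\zeta_2$. First I would use $c^2-\sigma^2=1$ to simplify
$$x_1^2+x_2^2=2(c-1)\bigl[(A_1^2+A_2^2)c-2A_1A_2\sigma\bigr],$$
and then multiply by $\zeta_2=\xi_3(-A_1\sigma+A_2 c)$ to obtain a linear combination of the monomials $c^2\sigma$, $c\sigma^2$, $c^3$, $\sigma^3$, $c\sigma$, $\sigma$, $c$, each of which admits an elementary antiderivative through the substitutions $u=c$ or $u=\sigma$ (for example $\int c^2\sigma\,ds=\tfrac{1}{3\xi_3}c^3$ and $\int c\sigma^2\,ds=\tfrac{1}{3\xi_3}\sigma^3$). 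The conceptual difficulty is negligible; the real obstacle is purely combinatorial bookkeeping, namely regrouping the constants into the symmetric polynomials $A_1(A_1^2+3A_2^2)$, $A_2(3A_1^2+A_2^2)$, $A_2(A_1^2+A_2^2)$, $A_2^3$ that appear in the statement, and fixing the integration constant $-\tfrac{1}{6}A_1(A_1^2+3A_2^2)$ so that $z(0)=0$. Once the coefficients are organized in this way the six antiderivatives assemble into the claimed expression for $z(s)$, completing the proof.
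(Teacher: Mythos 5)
Your proposal is correct and follows essentially the same route as the paper: solve the constant-coefficient hyperbolic system for $(\zeta_1,\zeta_2)$, integrate once to obtain $x_1(s)$ and $x_2(s)$, then substitute into $\dot y=\tfrac12(x_1\zeta_2+x_2\zeta_1)$ and $\dot z=\tfrac12(x_1^2+x_2^2)\zeta_2$ and integrate again, fixing constants by the initial conditions at the origin. Your intermediate simplifications (the cancellation of the $A_1A_2$ terms giving $\dot y=\tfrac{\xi_3}{2}(A_1^2-A_2^2)(\cosh(\xi_3 s)-1)$, and the factorization $x_1^2+x_2^2=2(\cosh(\xi_3 s)-1)\bigl[(A_1^2+A_2^2)\cosh(\xi_3 s)-2A_1A_2\sinh(\xi_3 s)\bigr]$) check out, and they merely make explicit the computation the paper summarizes as ``substituting and integrating.''
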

Projections of geodesics to the plane $(x_1,x_2)$ are hyperbolas,
for $\xi(0)=(\sqrt{2},1,1,0)$, \
$\xi(0)=(\frac{\sqrt{5}}{2},\frac{1}{2},1,0)$ and
$\xi(0)=(\frac{\sqrt{5}}{2},\frac{1}{2},-1,0),$ they are shown in
Figure \ref{fig2}.
\begin{figure}[h]
\begin{minipage}[t]{0.3\linewidth}
\centering
\includegraphics[width=0.8\textwidth]{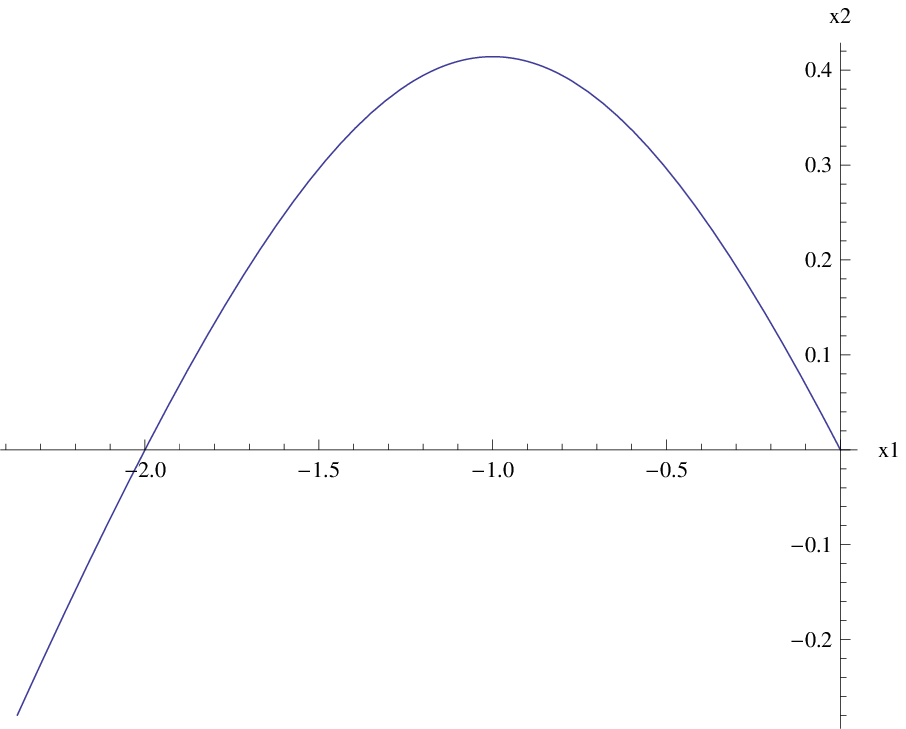}
\end{minipage}
\begin{minipage}[t]{0.3\linewidth}
\centering
\includegraphics[width=0.8\textwidth]{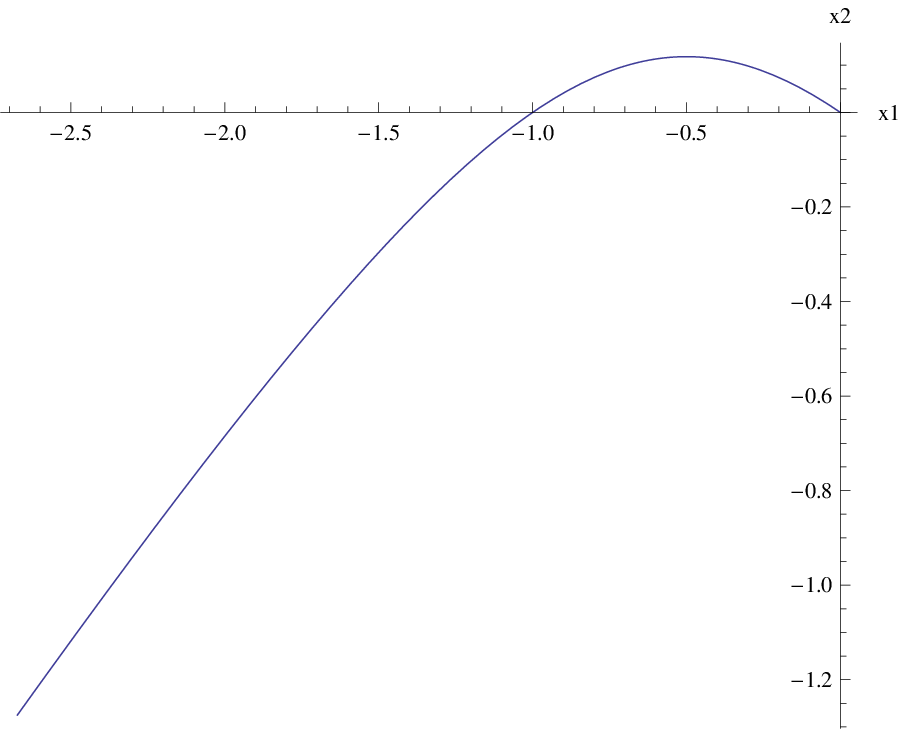}
\end{minipage}
\begin{minipage}[t]{0.3\linewidth}
\centering
\includegraphics[width=0.8\textwidth]{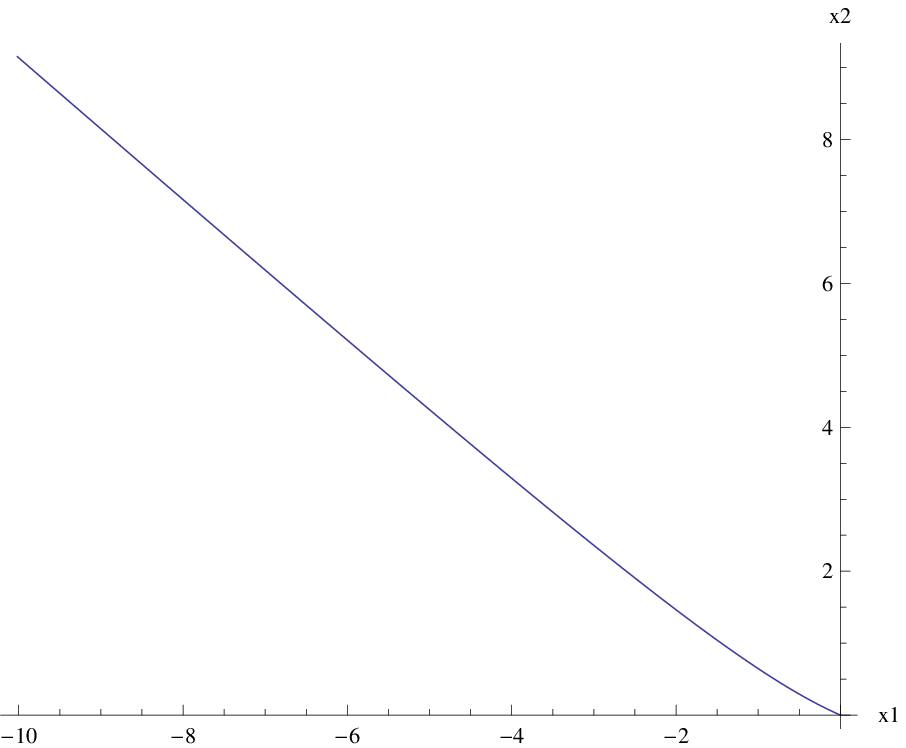}
\end{minipage}
\caption{Projections of geodesics to the plane $(x_1,x_2)$ when
$\xi_3\neq0,\ \xi_4=0$\label{fig2}}
\end{figure}

From this theorem, we obtain a description of the reachable set by
geodesics $\xi_{3}\neq0, \xi_{4}=0$ starting from the origin.
\begin{corollary}
In the case of $\xi_{3}\neq0, \xi_{4}=0$, let $(x_1,x_2,y,z)$ be a
point on a time-like geodesic, then we have
$$-1<\frac{4y}{-x_1^2+x_2^2}<1.$$
\end{corollary}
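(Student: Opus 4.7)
The plan is to reduce the claimed inequality to an elementary analytic inequality about hyperbolic functions, by exploiting the explicit formulas from the preceding theorem.

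First I would substitute the expressions for $x_1(s)$, $x_2(s)$, $y(s)$ into $-x_1^2+x_2^2$. Writing $\sigma=\sinh(\xi_3 s)$ and $c=\cosh(\xi_3 s)-1$, the formulas from the theorem read $x_1=-A_1\sigma+A_2 c$ and $x_2=-A_1 c+A_2\sigma$, so the cross terms cancel and
$$x_2^2-x_1^2=(A_2^2-A_1^2)(\sigma^2-c^2).$$
The identity $\cosh^2-\sinh^2=1$ rewrites as $\sigma^2-c^2=2(\cosh(\xi_3 s)-1)$, hence
$$-x_1^2+x_2^2=2(A_2^2-A_1^2)\bigl(\cosh(\xi_3 s)-1\bigr).$$
Combined with $y(s)=\tfrac{1}{2}(A_2^2-A_1^2)(\xi_3 s-\sinh(\xi_3 s))$, the factor $A_2^2-A_1^2$ cancels and the ratio collapses to
$$\frac{4y}{-x_1^2+x_2^2}=\frac{\xi_3 s-\sinh(\xi_3 s)}{\cosh(\xi_3 s)-1}.$$

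Next I would justify that this quotient is well defined along the geodesic for $s\neq 0$. The time-like normalization $-u_1^2(0)+u_2^2(0)=-1$ forces $(\xi_1^0)^2-(\xi_2^0)^2=1$, equivalently $A_1^2-A_2^2=1/\xi_3^2\neq 0$, so the factor $A_2^2-A_1^2$ is nonzero and the denominator vanishes only at $s=0$.

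Then I would prove the scalar inequality
$$\left|\frac{\tau-\sinh\tau}{\cosh\tau-1}\right|<1 \quad \text{for every } \tau\neq 0,$$
with $\tau=\xi_3 s$. Since $\cosh\tau-1>0$ for $\tau\neq 0$, this is equivalent to the pair of inequalities $\tau-\sinh\tau<\cosh\tau-1$ and $\sinh\tau-\cosh\tau+1<\tau$. Using $\sinh\tau+\cosh\tau=e^{\tau}$ and $\cosh\tau-\sinh\tau=e^{-\tau}$, these rearrange to the standard strict convexity estimates $1+\tau<e^{\tau}$ and $1-\tau<e^{-\tau}$, valid for all $\tau\neq 0$. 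This finishes the argument.

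There is no real obstacle here: the computation is short once the explicit formulas of the theorem are inserted, and the only subtle point is recognising that the time-like normalization $A_1^2-A_2^2=1/\xi_3^2$ rules out the degenerate case $A_1=\pm A_2$ in which both numerator and denominator would vanish identically.
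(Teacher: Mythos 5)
Your proposal is correct, and its skeleton coincides with the paper's proof: both substitute the explicit formulas of the preceding theorem, observe that the cross terms cancel so that
\begin{equation*}
-x_1^2+x_2^2=(A_2^2-A_1^2)\bigl(\sinh^2(\xi_3 s)-(\cosh(\xi_3 s)-1)^2\bigr)=2(A_2^2-A_1^2)\bigl(\cosh(\xi_3 s)-1\bigr),
\end{equation*}
cancel the factor $A_2^2-A_1^2$ against the same factor in $y(s)$, and thereby reduce the corollary to a one-variable inequality for hyperbolic functions. The difference is in how that scalar inequality is settled. The paper passes to the half-argument $\tau=\xi_3 s/2$ (writing the equal quantity $4(A_2^2-A_1^2)\sinh^2(\xi_3 s/2)$), expresses the ratio as $\frac{\tau}{\sinh^2 \tau}-\coth \tau$, and then merely asserts that this function is decreasing on $(-\infty,+\infty)$ with range $(-1,1)$; no verification of the monotonicity or of the limiting values is supplied. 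You instead keep the ratio in the form $\frac{\xi_3 s-\sinh(\xi_3 s)}{\cosh(\xi_3 s)-1}$ and prove exactly the bound that is needed, namely that its absolute value is less than $1$, by clearing the positive denominator and invoking the convexity inequalities $e^{\tau}>1+\tau$ and $e^{-\tau}>1-\tau$ for $\tau\neq 0$. This is a real gain in completeness: your two exponential estimates are immediate, whereas the paper's monotonicity claim is stronger than what the corollary requires and would need a derivative computation to justify. You also make explicit why the cancellation is legitimate, since the time-like normalization gives $A_1^2-A_2^2=1/\xi_3^2\neq 0$, so the common factor is nonzero and the denominator vanishes only at $s=0$; the paper passes over this nondegeneracy in silence.
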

\begin{proof}
 By (\ref{x 1}) and (\ref{x 2}), we get
\begin{align}\label{x_1^2}
-x_1^2+x_2^2=4(A_2^2-A_1^2)\sinh^2\left(\frac{\xi_3}{2}\right),
\end{align}
substituting (\ref{x_1^2}) into (\ref{y(s)}), we obtain the
following equation:
\begin{align}
y=\frac{(-x_1^2+x_2^2)(\xi_3-\sinh(\xi_3))}{8\sinh^2\left(\frac{\xi_3}{2}\right)},
\end{align}
if we set $\tau=\frac{\xi_3}{2}$, then
\begin{align}
y=\frac{(-x_1^2+x_2^2)}{4}\left(\frac{\tau}{\sinh^2(\tau)}-{\coth(\tau)}\right),
\end{align}
or
\begin{align}\label{re1}
\frac{4y}{(-x_1^2+x_2^2)}=\frac{\tau}{\sinh^2(\tau)}-{\coth(\tau)}.
\end{align}
It is easy to check that the right hand side of (\ref{re1}) is a
decreasing function in $(-\infty,+\infty),$ and its range is
$(-1,1).$ That is to say, the points on the time-like geodesics
should satisfy $$-1<\frac{4y}{-x_1^2+x_2^2}<1.$$ This ends the
proof.
\end{proof}

Next, we consider the case $\xi_4\neq0$. Recall that
\begin{align}\label{normal equations2}
\dot\zeta_1=\beta\zeta_2,\ \ \dot\zeta_2=\beta\zeta_1,\ \
\hbox{where} \ \ \beta(s)=-(\xi_3+x_1\xi_4),\ \
\dot\beta=\xi_4\zeta_1.\
\end{align}
Combining the expressions for $\dot \beta$ and $\dot \zeta_2$ to get
\begin{align}
\xi_4\dot\zeta_2=\beta\xi_4\zeta_1=\beta\dot\beta.
\end{align}
Integrating both sides, we have
\begin{align}
\xi_4\zeta_2=\frac{\beta^2}{2}+C_1,\ \ \hbox{where}\ \
C_1=\xi_4\zeta_2(0)-\frac{\beta^2(0)}{2}=\xi_4\xi_2^{0}-\frac{\xi_3^2}{2}.
\end{align}
This yields
\begin{align}
x_1(s)=-\frac{\beta(s)+\xi_3}{\xi_4},\label{eq 1}
\end{align}
and
\begin{align}
\zeta_2(s)=\frac{1}{\xi_4}\left(\frac{\beta^2(s)}{2}+C_1\right).
\end{align}
Since $\dot x_2=\zeta_2,$ we deduce
\begin{align}
x_2(s)=\int_0^s\zeta_2(t)dt=\frac{1}{\xi_4}\int_0^s\left(\frac{\beta^2(t)}{2}+C_1\right)dt.\label{eq
2}
\end{align}

To compute $y(s)$ in term of $\beta(s)$, we note that
\begin{align}
\dot y=\frac 1 2(x_1\zeta_2+x_2\zeta_1)=\frac12(x_1\dot x_2-x_2\dot
x_1),\label{eq 3}
\end{align}
then integration by parts yields
\begin{align}
y(s)&=\frac12\int_0^s(x_1\dot x_2-x_2\dot
x_1)dt=\int_0^sx_1\zeta_2dt-\frac12x_1x_2\nonumber\\
&=-\frac{1}{\xi_4^2}\int_0^s\left({\beta(t)+\xi_3}\right)\left(\frac{\beta^2(t)}{2}+C_1\right)dt-\frac12x_1x_2.\label{eq
33}
\end{align}
Finally, since $\dot z=\frac{x_1^2+x_2^2}{2}\zeta_2,$
\begin{align}
z(s)&=\int_0^s\frac{x_1^2+x_2^2}{2}\zeta_2dt=\frac12\int_0^sx_1^{2}\zeta_2dt+\frac16x_2^3\nonumber\\
&=\frac{1}{2\xi_4^3}\int_0^s\left({\beta(t)+\xi_3}\right)^2\left(\frac{\beta^2(t)}{2}+C_1\right)dt+\frac16x_2^3.\label{eq
4}
\end{align}
Once we find $\beta$, we can find the geodesic $(x_1(s), x_2(s),
y(s), z(s))$ explicitly.

Since $\dot \beta(s)=\xi_4\zeta_1$,
$\dot\beta(0)=\xi_4\zeta_1(0)=\xi_4\xi_1^{0}$, we have
\begin{align}
\ddot\beta(s)=\xi_4\dot\zeta_1=\xi_4\beta(s)\zeta_2=\beta(s)(\xi_4\zeta_2)=\beta(s)\left(\frac{\beta^2(s)}{2}+C_1\right).
\end{align}
Multiplying both sides by $2\dot\beta(s)$ and integrating, we have
\begin{align}
\dot\beta^2(s)=\frac{\beta^4(s)}{4}+C_1\beta^2(s)+C_2=\left(\frac{\beta^2(s)}{2}+C_1\right)^2+C_2-C_1^2,
\end{align}
where $C_2$ is a constant, and
\begin{align}
C_2=\dot\beta^2(0)-\frac{\beta^4(0)}{4}-C_1\beta^2(0)=(\xi_1^{0})^2\xi_4^2+\frac{\xi_3^4}{4}-\xi_2^{0}\xi_3^2\xi_4.
\end{align}
Then
\begin{align}\label{causal}
C_2-C_1^2=(\xi_1^{0})^2\xi_4^2+\frac{\xi_3^4}{4}-\xi_2^{0}\xi_3^2\xi_4-\left(\xi_2^{0}\xi_4-\frac{\xi_3^2}{2}\right)^2=\xi_4^2((\xi_1^{0})^2-(\xi_2^{0})^2)=\xi_4^2,
\end{align}
since $(\xi_1^{0})^2-(\xi_2^{0})^2=1$.

Assume $\dot\beta(s)>0$, we have
\begin{align}
\frac{d\beta(s)}{ds}=\sqrt{\left(\frac{\beta^2(s)}{2}+C_1\right)^2+\xi_4^2}.
\end{align}
Hence
\begin{align}\label{ode}
ds=\frac{d\beta}{\sqrt{\left(\frac{\beta^2(s)}{2}+C_1\right)^2+\xi_4^2}}.
\end{align}
Let $\rho^2=C_1+\xi_4i$, $\bar{\rho}^2=C_1-\xi_4i$ and
$u=\frac{\beta}{\sqrt 2}$, integrating (\ref{ode}) from $0$ to $s$,
we obtain
\begin{align}
s=\int_{\frac{\beta(0)}{\sqrt 2}}^{\frac{\beta(s)}{\sqrt
2}}\frac{\sqrt 2du}{\sqrt{(u^2+\rho^2)(u^2+\bar\rho^2)}}.
\end{align}

Set
\begin{align}
&k^2=-\frac{(\rho-\bar\rho)^2}{4\rho\bar\rho}=\frac{\sqrt{C_1^2+\xi_4^2}-C_1}{2\sqrt{C_1^2+\xi_4^2}},\nonumber\\
&\mathfrak{g}=\frac{1}{2\sqrt{\rho\bar\rho}}=\frac{1}{2(C_1^2+\xi_4^2)^{\frac{1}{4}}}.\nonumber
\end{align}
Since
\begin{align}\label{Jacobi function}
\int_y^\infty\frac{dt}{\sqrt{(t^2+\rho^2)(t^2+\bar\rho)}}=\mathfrak{g}\cdot
cn^{-1}\ (cos\ \varphi,k)=\mathfrak{g}F(\varphi,k),
\end{align}
where $cn^{-1}\ (y,k)$ is a Jacobi's Inverse Elliptic Functions, and
\begin{align}
\varphi=\cos^{-1}\
\left(\frac{y^2-\rho\bar\rho}{y^2+\rho\bar\rho}\right),\ \ \
F(\varphi,k)=\int_0^\varphi\frac{dt}{\sqrt{1-k^2sin^2\ t}}.\nonumber
\end{align}
Hence
\begin{align}
\int_{\frac{\beta(0)}{\sqrt 2}}^{\frac{\beta(s)}{\sqrt
2}}\frac{\sqrt
2du}{\sqrt{(u^2+\rho^2)(u^2+\bar\rho^2)}}=\int_{\frac{\beta(0)}{\sqrt
2}}^{\infty}\frac{\sqrt
2du}{\sqrt{(u^2+\rho^2)(u^2+\bar\rho^2)}}-\int_{\frac{\beta(s)}{\sqrt
2}}^{\infty}\frac{\sqrt
2du}{\sqrt{(u^2+\rho^2)(u^2+\bar\rho^2)}}.\nonumber
\end{align}
According to (\ref{Jacobi function}), we have
\begin{align}
\int_{\frac{\beta(0)}{\sqrt 2}}^{\infty}\frac{\sqrt
2du}{\sqrt{(u^2+\rho^2)(u^2+\bar\rho^2)}}=\sqrt2\mathfrak{g}F(\varphi_1,k)=constant,
\end{align}
where
\begin{align}\varphi_1=cos^{-1}\ \left(\frac{\xi_3^2-2\sqrt{C_1+\xi_4^2}}{\xi_3^2+2\sqrt{C_1+\xi_4^2}}\right)\nonumber.\end{align}

Since
\begin{align}
\int_{\frac{\beta(s)}{\sqrt 2}}^{\infty}\frac{\sqrt
2du}{\sqrt{(u^2+\rho^2)(u^2+\bar\rho^2)}}=\sqrt2\mathfrak{g}\cdot
cn^{-1}\
\left(\frac{\beta^2(s)-2\rho\bar\rho}{\beta^2(s)+2\rho\bar\rho}\right).
\end{align}
Hence
\begin{align}
cn^{-1}\
\left(\frac{\beta^2(s)-2\rho\bar\rho}{\beta^2(s)+2\rho\bar\rho}\right)=F(\varphi_1,k)-\frac{s}{\sqrt2\mathfrak{g}},
\end{align}
let $F=F(\varphi_1,k),$ we obtain
\begin{align}\label{beita2}
\beta^2(s)=\frac{2\rho\bar\rho\left(1+cn\ \left(F
-\frac{s}{\sqrt2\mathfrak{g}},k\right)\right)}{\left(1-cn\
\left(F-\frac{s}{\sqrt2\mathfrak{g}},k\right)\right)}=
\frac{2\rho\bar\rho\left(1+cn\
\left(2\tilde{s},k\right)\right)}{\left(1-cn\
\left(2\tilde{s},k\right)\right)},
\end{align}
where $2\tilde{s}=F-\frac{s}{\sqrt2\mathfrak{g}}$.

Since
\begin{align}
\frac{1-cn\ (2s)}{1+cn\ (2s)}=tn^2\ (s)dn^2\ (s),
\end{align}
hence
\begin{align}\label{beita1}
\beta(s)=\frac{\sqrt{2\rho\bar\rho}}{tn\ (\tilde s,k)dn\ (\tilde
s,k)}=\sqrt{2\rho\bar\rho}cs\ (\tilde s,k)nd\ (\tilde s,k).
\end{align}

For the case of $\dot{\beta}(s)<0,$ we can calculate by the same
method, and get the same result. But the expression of the parameter
$\tilde{s}$ in (\ref{beita2}) and(\ref{beita1}) should be changed to
$$\frac{1}{2}\left(F+\frac{s}{\sqrt{2}\mathfrak{g}}\right).$$ Thus the
sign of $\dot{\beta}(s)$ will not affect the expression of the
geodesics.

Therefore, integrating equations (\ref{eq 1}), (\ref{eq 2}),
(\ref{eq 33}) and (\ref{eq 4}), we get a complete description of the
Hamiltonian time-like geodesics in the Engel group.
\begin{theorem}\label{normal timelike geodesics}
In the case of $\xi_{4}\neq0$, time-like geodesics starting from the
origin are given by:
\begin{align}
&x_{1}(s)=-\frac{1}{\xi_{4}}(\beta(s)+\xi_3),\\
&x_{2}(s)=\frac{1}{2\xi_4}(B_2(s)+2C_1),\\
&y(s)=-\frac{1}{2\xi_4^2}(B_3(s)+2C_1B_1(s)+\xi_3B_2(s)+2C_1\xi_3s)-\frac{1}{2}x_1(s)x_2(s),\\
&z(s)=\frac{1}{4\xi_4^3}(B_4(s)+2C_1B_2(s)+2\xi_3B_3(s)+4C_1\xi_3B_1(s)+\xi_3^2B_2(s)+2C_1\xi_3^2)+\frac{1}{6}x_2^3(s),
\end{align}
where $C_1=\xi_4\xi_2^0-\frac{\xi_3^2}{2}$,
$B_i(s)=\int^{s}_{0}\beta^{i}(t)dt,\ i=1,\ldots,4,$ and the
expressions of $B_i(s)$ are presented in Appendix.
\end{theorem}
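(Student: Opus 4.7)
The plan is to carry out the substitutions that were already set up in equations (\ref{eq 1}), (\ref{eq 2}), (\ref{eq 33}) and (\ref{eq 4}): there the coordinates $x_1,x_2,y,z$ were all reduced to polynomial integrals in $\beta(s)$, using the relation $\xi_4\zeta_2=\tfrac12\beta^2+C_1$ with $C_1=\xi_4\xi_2^{0}-\tfrac{\xi_3^2}{2}$. So the heavy lifting for the Hamiltonian ODE has been done; what remains in the theorem statement is to express each coordinate as an explicit linear combination of the functions $B_i(s)=\int_0^s\beta^i(t)\,dt$ for $i=1,\dots,4$, together with the polynomial first integrals $\xi_3,C_1$.

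First I would note that $x_1$ is immediate from (\ref{eq 1}); no integration is required. For $x_2$, I would substitute $\zeta_2=\tfrac{1}{\xi_4}(\tfrac{\beta^2}{2}+C_1)$ into (\ref{eq 2}) and split the integral term-by-term to get $x_2(s)=\tfrac{1}{2\xi_4}(B_2(s)+2C_1 s)$. For $y$, I would expand
\begin{align}
\nonumber (\beta+\xi_3)\!\left(\tfrac{\beta^2}{2}+C_1\right)=\tfrac12\beta^3+\tfrac{\xi_3}{2}\beta^2+C_1\beta+C_1\xi_3,
\end{align}
integrate on $[0,s]$ to obtain $\tfrac12 B_3+\tfrac{\xi_3}{2}B_2+C_1 B_1+C_1\xi_3 s$, multiply by $-1/\xi_4^2$, and then add the boundary term $-\tfrac12 x_1 x_2$ coming from the integration by parts that produced (\ref{eq 33}). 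Similarly, for $z$ I would expand
\begin{align}
\nonumber (\beta+\xi_3)^2\!\left(\tfrac{\beta^2}{2}+C_1\right)=\tfrac12\beta^4+\xi_3\beta^3+\!\left(C_1+\tfrac{\xi_3^2}{2}\right)\!\beta^2+2C_1\xi_3\beta+C_1\xi_3^2,
\end{align}
integrate term-by-term, multiply by $1/(2\xi_4^3)$ as in (\ref{eq 4}), and append the boundary contribution $\tfrac16 x_2^3$ produced by the integration by parts there. Collecting coefficients yields exactly the four formulas in the statement.

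The one remaining issue is that these formulas are only useful once $\beta(s)$, and hence the integrals $B_i(s)$, can be written in closed form. This is the genuinely hard step, but it was already performed above: equations (\ref{beita2})--(\ref{beita1}) give $\beta(s)$ in terms of the Jacobi elliptic functions $cn,\,cs,\,nd$ with modulus $k^2=(\sqrt{C_1^2+\xi_4^2}-C_1)/(2\sqrt{C_1^2+\xi_4^2})$, and the observation that the sign of $\dot\beta$ does not affect the final answer (because of the evenness of $\beta^2$ and the shift $\tilde s\mapsto F-\tilde s$ in the argument). Once $\beta(s)$ is available, each $B_i(s)=\int_0^s\beta^i(t)\,dt$ reduces to a standard elliptic integral of powers of $cs\cdot nd$, which can be evaluated in terms of Jacobi's $E$ and $\Pi$ functions; the resulting closed forms are relegated to the Appendix, so the only task inside the proof is the polynomial bookkeeping described above.

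The main obstacle, therefore, is not the derivation in the theorem proper but the explicit evaluation of the $B_i(s)$; within the proof itself the only subtlety is to keep track of the constant-in-$s$ term $C_1\xi_3^2 s$ (respectively $2C_1 s$ in $x_2$) arising from the integration of the pure constant in each expanded integrand, and to correctly pair the boundary contributions $-\tfrac12 x_1 x_2$ and $\tfrac16 x_2^3$ with their respective integrals.
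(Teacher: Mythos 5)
Your proposal follows essentially the same route as the paper, which proves the theorem exactly by this computation: substituting $\xi_4\zeta_2=\tfrac{1}{2}\beta^2+C_1$ into equations (\ref{eq 1}), (\ref{eq 2}), (\ref{eq 33}) and (\ref{eq 4}), expanding the polynomial integrands, integrating term-by-term to produce the $B_i(s)$, and invoking the Jacobi-elliptic expression (\ref{beita1}) for $\beta$ together with the Appendix formulas. Your careful bookkeeping of the constant terms is in fact more accurate than the printed statement: integrating the constants yields $2C_1 s$ in $x_2(s)$ and $2C_1\xi_3^2 s$ in $z(s)$, so the paper's displayed $2C_1$ and $2C_1\xi_3^2$ are missing a factor of $s$ (typos), and your versions are the ones the derivation actually gives.
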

Projections of geodesics to the plane $(x_1,x_2)$ with
$\xi(0)=(1,0,1,1)$, \ $\xi(0)=(\frac{\sqrt{5}}{2},1/2,2,1)$ and
$\xi(0)=(\frac{\sqrt{5}}{2},1/2,1,1)$ are shown in Figure
\ref{fig3}.
\begin{figure}[h]
\begin{minipage}[t]{0.3\linewidth}
\centering
\includegraphics[width=0.8\textwidth]{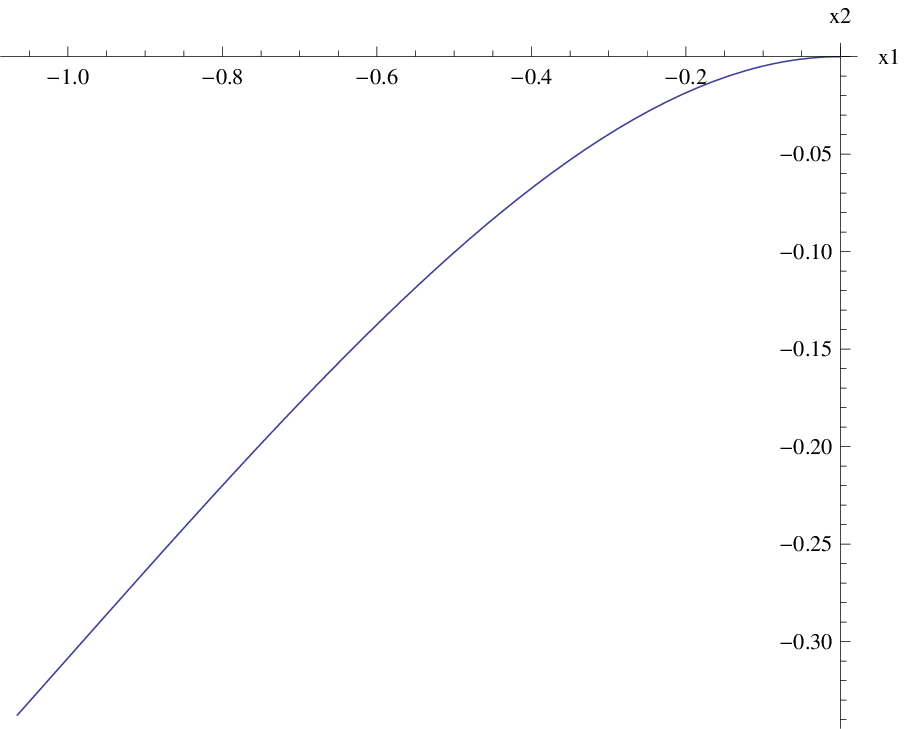}
\end{minipage}
\begin{minipage}[t]{0.3\linewidth}
\centering
\includegraphics[width=0.8\textwidth]{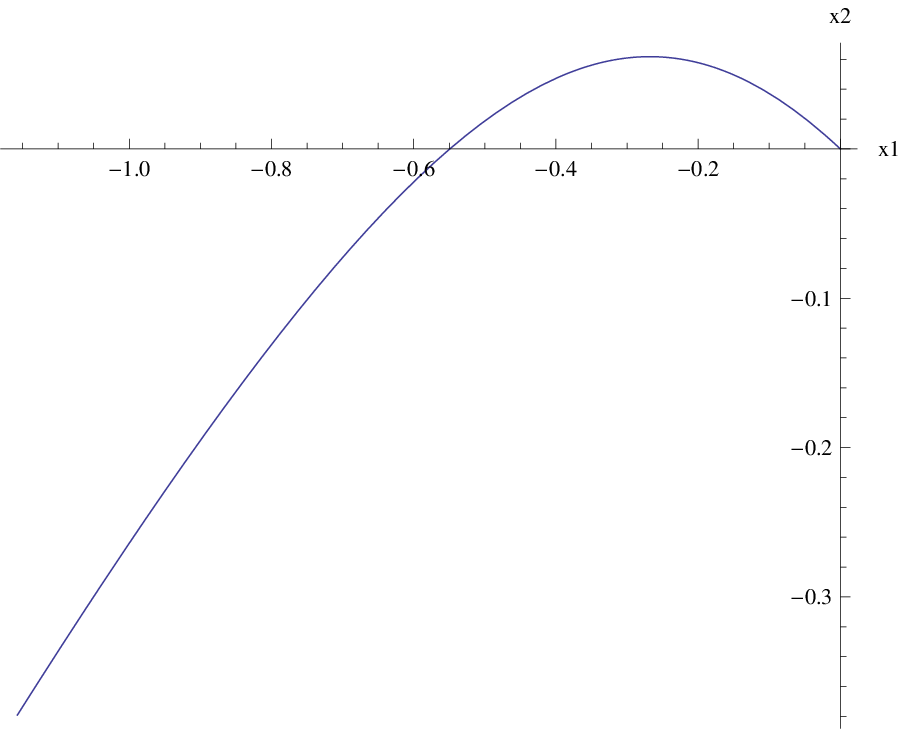}
\end{minipage}
\begin{minipage}[t]{0.3\linewidth}
\centering
\includegraphics[width=0.8\textwidth]{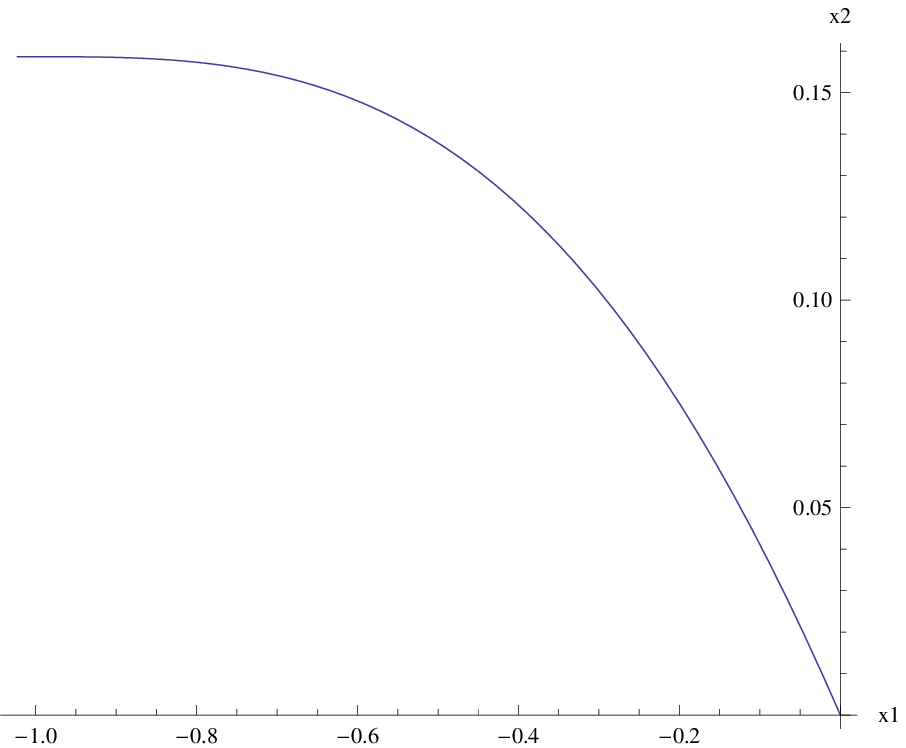}
\end{minipage}
\caption{Projections of geodesics to the plane $(x_1,x_2)$ when
$\xi_3\neq0,\ \xi_4\neq0$\label{fig3}}
\end{figure}

\subsection*{\normalsize Appendix} Denoting
\begin{align}
\nonumber&\psi_1(s)=ln\ (k'^2+cs^2\ (s,k)),
\nonumber&\psi_2(s)=\frac{dn\ (2s,k)sn\ (2s,k)}{(1-cn\ (2s,k))^2},\\
\nonumber&\psi_3(s)=\frac{E(2s,k)-E(2s,k)cn\ (2s,k)}{1-cn\ (2s,k)},
\nonumber&F(\varphi,k)=\int_0^\varphi\frac{dt}{\sqrt{1-k^2sin^2\
t}},
\end{align}
\begin{align}
\nonumber
k^2=\frac{\sqrt{C_1^2+\xi_4^2}-C_1}{2\sqrt{C_1^2+\xi_4^2}}, \ \ \ \
\ \ \mathfrak{g}=\frac{1}{2(C_1^2+\xi_4^2)^{\frac{1}{4}}},\ \ \ \ \
\ \varphi_1=cos^{-1}\
\left(\frac{\xi_3^2-2\sqrt{C_1+\xi_4^2}}{\xi_3^2+2\sqrt{C_1+\xi_4^2}}\right)
\end{align}
we get the expressions of $B_i(s)$ as following:
\begin{align}
\nonumber B_1(s)&=\int^{s}_{0}\beta(t)dt=\mathfrak{g}\psi_1(\tilde{s})+D_{1},\\
\nonumber B_2(s)&=\int^{s}_{0}\beta^{2}(t)dt=\frac{\sqrt{2}}{\mathfrak{g}}[-3\tilde{s}+(1-cn\ (2\tilde{s},k))\psi_2(\tilde{s})+\psi_3(\tilde{s})]+D_2,\\
\nonumber B_3(s)&=\int^{s}_{0}\beta^{3}(t)dt=\frac{1}{2\mathfrak{g}^2k'^2}[k'^2cs^2\ (\tilde{s},k)+k'^2(2k^2-1)\psi_1(\tilde{s})-(2k^4-k^6-k^2)e^{\psi_1(\tilde{s})}]+D_3,\\
\nonumber
B_4(s)&=\int^{s}_{0}\beta^{4}(t)dt=\frac{\sqrt{2}}{3\mathfrak{g}^{3}}[-\frac{3}{2}\tilde{s}+(3-4k^2)\psi_2(\tilde{s})+4k'^2(1+k^2)\tilde{s}-2k'^2E(2\tilde{s},k)]
+D_{4},
\end{align}
where
$\tilde{s}=\frac{1}{2}\left(F\pm\frac{s}{\sqrt{2}\mathfrak{g}}\right)$,
$F=F(\varphi_1,k)$, $D_{1}, D_{2}, D_{3}, D_{4}$ are constants, and
\begin{align}
\nonumber D_1&=-\mathfrak{g}\psi_1\left(\frac{F}{2}\right),\\
\nonumber D_2&=\frac{\sqrt{2}}{\mathfrak{g}}\left[\frac{3F}{2}-(1-cn\ (F,k))\psi_2\left(\frac{F}{2}\right)-\psi_3\left(\frac{F}{2}\right)\right],\\
\nonumber D_3&=\frac{1}{2\mathfrak{g}^2k'^2}\left[-k'^2cs^2\ \left(\frac{F}{2},k\right)-k'^2(2k^2-1)\psi_1\left(\frac{F}{2}\right)+(2k^4-k^6-k^2)e^{\psi_1\left(\frac{F}{2}\right)}\right],\\
\nonumber
D_4&=\frac{\sqrt{2}}{3\mathfrak{g}^{3}}\left[\frac{3F}{4}-(3-4k^2)\psi_2\left(\frac{F}{2}\right)-2k'^2(1+k^2)F+2k'^2E(F,k)\right].
\end{align}

\subsection*{\normalsize Acknowledgement}
This work was supported by NSFC(No.11071119, No.11401531) and
NSFC-RFBR (No. 11311120055). The authors cordially thank the
referees for their careful reading and helpful comments.

\clearpage

\end{document}